\newtheorem{theorem}{Theorem}
\newtheorem{lemma}[theorem]{Lemma}
\newtheorem{corollary}{Corollary}
\newtheorem{remark}{Remark}
\newtheorem{assumption}{Assumption}
\theoremstyle{definition}
\newcommand{\ov}{\overline}
\newcommand{\mc}{\mathcal}
\newcommand{\mr}{\mathrm}
\newcommand{\bb}{\mathbb}
\newcommand{\li}{\mathrm{li}}
\newcommand{\legendre}[2]{\ensuremath{\left( \frac{#1}{#2} \right) }}
\newcommand{\md}{\,\mathrm{d}}
\renewcommand{\Re}{\mathrm{Re}}
\declaretheoremstyle
    [headformat={\NOTE}, 
    notebraces={}{}, 
    notefont=\bfseries, 
    preheadhook=\def\thmt@space{}, 
    numbered=no
    ]{namedtheorem}
\begin{document}

\author{Tianyu Zhao}

\title{On the mean values of the error terms in Mertens' theorems}

\address{
    Department of Mathematics, The Ohio State University, 231 West 18th
    Ave, Columbus, OH 43210, USA.
}
\email{zhao.3709@buckeyemail.osu.edu}

\subjclass[2020]{11M06, 11M26, 11N05, 11Y35}
\keywords{Mertens' theorems, Riemann hypothesis, Generalized Riemann hypothesis}

\begin{abstract}
    For $i\in \{1,2,3\}$, let $E_i(x)$ denote the error term in each of the three theorems of Mertens on the asymptotic distribution of prime numbers. We show that for $i\in \{1,2\}$ the Riemann hypothesis is equivalent to the condition $\int_2^X E_i(x)\md x>0$ for all $X>2$, and we examine assumptions under which the equivalence also holds for $i=3$. In addition, we extend our results to analogues of Mertens' theorems concerning prime sums twisted by quadratic Dirichlet characters or restricted to arithmetic progressions.
\end{abstract}

\maketitle

\section{Introduction}\label{Introduction}
\subsection{Mertens' theorems}
Set
\[
\mc{E}_1:=-\gamma_E-\sum_p \sum_{n=2}^\infty \frac{\log p}{p^n}=-1.3325\ldots, \hspace{0.5cm} \mc{E}_2:=\gamma_E-\sum_p \sum_{n=2}^\infty \frac{1}{np^n}=0.2614\ldots,
\]
where $\gamma_E=0.5772\ldots$ denotes Euler's constant. Then define
\begin{align*}
    E_1(x):=&\sum_{p\leq x}\frac{\log p}{p}-\log x-\mc{E}_1,\\
    E_2(x):=&\sum_{p\leq x}\frac{1}{p}-\log\log x-\mc{E}_2,\\
    E_3(x):=&\frac{1}{\log x}\prod_{p\leq x}\left(1-\frac{1}{p}\right)^{-1}-e^{\gamma_E}.
\end{align*}
In 1874, Mertens \cite{Mer} showed that $E_1(x)=O(1)$, $E_2(x)=O(1/\log x)$, and $E_3(x)=O(1/\log x)$ as $x\to \infty$. He fell short of proving the prime number theorem, from which one can derive that $E_1(x)=o(1)$ (see \cite[Exercise 8.1.1 \#1]{MV}). Sharper bounds on $E_i(x)$ are available owing to refinements to the prime number theorem. Regarding the oscillations of $E_i(x)$, Robin \cite{Rob} demonstrated that they attain arbitrarily large values in both signs (see also \cite{DP}). However, all values of $E_i(x)$ computed so far are positive; in particular, Rosser and Schoenfeld \cite[Theorems 20, 21, 23]{RS} found that $E_i(x)>0$ for $2\leq x\leq 10^8$. B\"{u}the \cite{JB1} proved that the first sign change of $E_2(x)$ occurs before $x\approx 1.91\cdot 10^{215}$, an analogue of Skewes's number. Such a bias toward positive values has been justified and quantified under certain plausible assumptions by Lamzouri \cite{Lam} based on the seminal work of Rubinstein and Sarnak \cite{RubSar} on the bias exhibited by the error term in the prime number theorem.

One possible way to gain insight into the bias of an oscillating error term is to look at its mean value over finite intervals. Recently Johnston \cite{Joh} established an equivalence between the Riemann hypothesis (RH) and the condition
\[
\int_2^X (\pi(x)-\li(x))\md x<0 \hspace{0.5cm}\text{for all $X>2$}.
\]
See \cite{Suz} and \cite{Was} for related results involving averages of some other error terms that appear in analytic number theory.
In this work we investigate similar behavior exhibited by $E_i(x)$ and discuss what happens in more general contexts. Our first result is as follows.
\begin{theorem}\label{thm E_1 E_2}
For each $i\in \{1,2\}$, the Riemann hypothesis is equivalent to the condition
\[
\int_2^X E_i(x)\md x>0 \hspace{0.5cm}\text{for all $X>2$}.
\]
\end{theorem}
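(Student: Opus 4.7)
My plan is to reduce both directions of the equivalence to the analytic structure of a single Mellin transform, via a closed-form identity for $\int_2^X E_i(x)\md x$. Since $\sum_{p\leq x}f(p)$ is piecewise constant in $x$, we have $\int_2^X\sum_{p\leq x}f(p)\md x = \sum_{p\leq X}f(p)(X-p)$; applying this with $f(p)=(\log p)/p$ and $f(p)=1/p$, and treating the remaining $\int_2^X\log x\md x$ and $\int_2^X\log\log x\md x$, yields
\[
\int_2^X E_1(x)\md x = XE_1(X)-(\theta(X)-X)+C_1,\qquad \int_2^X E_2(x)\md x = XE_2(X)-(\pi(X)-\li(X))+C_2.
\]
Combining with the Abel-summation identities
\[
E_1(X)=\frac{\psi(X)-X}{X}-\int_X^\infty\frac{\psi(t)-t}{t^2}\md t+\sum_{p^k>X,\,k\geq 2}\frac{\log p}{p^k},\qquad E_2(X)=\frac{\pi(X)-\li(X)}{X}-\int_X^\infty\frac{\pi(t)-\li(t)}{t^2}\md t
\]
collapses these to
\[
\int_2^X E_1(x)\md x = T(X)-X\int_X^\infty\frac{\psi(t)-t}{t^2}\md t+C_1,\qquad \int_2^X E_2(x)\md x = C_2-X\int_X^\infty\frac{\pi(t)-\li(t)}{t^2}\md t,
\]
where $T(X):=\sum_{p,\,k\geq 2}(\log p)\min(1,X/p^k)$ is manifestly positive.

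For the forward direction, under RH I would invoke the explicit formula. A short partial-summation computation yields $T(X)=2\sqrt{X}+O(X^{1/3})$, and the explicit formula for $\psi(t)-t$ converts the integral term into $\sqrt{X}\sum_\rho X^{i\gamma}/(\rho(\rho-1))+O(X^{1/4}\log^2 X)$, an oscillation of amplitude at most $K\sqrt{X}$ with $K:=\sum_\rho 1/|\rho|^2=2+\gamma_E-\log(4\pi)\approx 0.046$. Since $K<2$ one obtains $\int_2^X E_1(x)\md x\geq(2-K)\sqrt{X}+O(X^{1/3})+C_1$, positive for all $X$ beyond an explicit threshold; positivity on the remaining bounded range reduces to the Rosser--Schoenfeld verification cited in the introduction. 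The $i=2$ analysis is parallel: the $-\tfrac12\li(\sqrt{t})$ piece of $\pi(t)-\li(t)$ yields a positive bias of order $2\sqrt{X}/\log X$ that dominates the oscillating contribution from zeros.

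For the converse I would use Landau's theorem. Set $I_i(X):=\int_2^X E_i(x)\md x$ and form $G_i(s):=\int_2^\infty I_i(X)X^{-s}\md X$. Integration by parts gives $G_i(s)=F_i(s-1)/(s-1)$ where $F_i(s):=\int_2^\infty E_i(x)x^{-s}\md x$. Expanding $F_i$ via $-\zeta'/\zeta$ and isolating the prime-square correction $H(s):=\sum_p(\log p)p^{-2s}/(1-p^{-s})$---which inherits a simple pole at $s=1/2$ from the pole of $-\zeta'/\zeta(2s)$---I would show that $F_i$ has exactly one real singularity in $\Re s\geq 1/2$, namely at $s=1/2$, while all other singularities lie at non-trivial zeros of $\zeta$ (and the apparent double pole at $s=1$ cancels). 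Consequently the only real singularities of $G_i$ in $\Re s\geq 1$ are $s=3/2$ and possibly $s=1$. Assuming $I_i(X)>0$ for all $X>2$, Landau's theorem forces the abscissa of convergence $\sigma_c$ of $G_i$ to be a real singular point of $G_i$. Should RH fail, there exists a zero $\rho_0$ of $\zeta$ with $\Re\rho_0>1/2$, giving $G_i$ a non-real singularity at $s=1+\rho_0$, so $\sigma_c\geq 1+\Re\rho_0>3/2$; but $G_i$ has no real singularity in $(3/2,\infty)$, a contradiction, so RH must hold.

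The main obstacle is the sharpness of the forward estimate: the amplitude bound $K<2$ is what drives the proof, and it rests on the closed-form value $K=2+\gamma_E-\log(4\pi)$. One then must bridge the asymptotic bound to small $X$ via numerical data and extract an analogous constant for $i=2$. In the converse, the subtle algebraic point is locating the real singularity of $F_i$ at $s=1/2$, which is invisible from $-\zeta'/\zeta$ alone and only emerges once the prime-power correction $H(s)$ is separated out, together with verifying the cancellation at $s=1$ that prevents a spurious double pole of $F_i$ there.
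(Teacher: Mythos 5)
Your proposal is correct and follows essentially the same route as the paper: the identity $\int_2^X E_i(x)\md x = -X\int_X^\infty(\theta(t)-t)/t^2\md t + C_1$ (resp.\ with $\pi-\li$), the positive bias $2\sqrt{X}$ from prime powers (resp.\ from $-\tfrac12\li(\sqrt{t})$) beating the zero-sum amplitude $B_1=2+\gamma_E-\log 4\pi<2$, numerical verification for small $X$, and Landau's oscillation theorem for the converse. The only cosmetic difference is that you apply Landau directly to the Mellin transform of $\int_2^X E_i$, whereas the paper applies it to $X^{\Theta-1-\epsilon}+\int_X^\infty(\psi(x)-x)/x^2\md x$ to get the sharper $\Omega_\pm(X^{\Theta-\epsilon})$ conclusion.
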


We naturally expect this equivalence to hold for $E_3$ as well. However, complications arise in the hypothetical scenario where $\zeta(s)$ has zeros arbitrarily close to the 1-line, in which case we have to rely on Assumption~\ref{assumption} (see \S\ref{section: proof of thm E_3 Theta=1}). This assumption roughly states that the sharp boundary of the zero-free region can be described by a nice function $\eta$ whose derivative satisfies certain growth conditions. For example, the typical form of zero-free regions established in the literature is given by $\sigma\geq 1-\eta(|t|)$ where $\eta(t)=C(\log t)^{-A}(\log\log t)^{-B}$ (for $t\geq 3$, say) with constants $A,B,C$. Note that $\eta(t)$ satisfies (iii) and (iv) in Assumption~\ref{assumption}, but the problem lies in that we are not certain if the tightest possible bound must also have this form.

We now state the precise result for $E_3(x)$.
\begin{theorem}\label{thm E_3}
Let $\Theta:=\sup\{\Re(\rho):\zeta(\rho)=0\}$.
If $\Theta=1/2$, then $\int_2^X E_3(x)\md x>0$ for all $X>2$. If $1/2<\Theta<1$, or if $\Theta=1$ and Assumption~\ref{assumption} holds, then $\int_2^X E_3(x)\md x$ changes sign infinitely often.
\end{theorem}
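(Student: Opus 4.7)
The central reduction is the identity
\[
E_3(x) = e^{\gamma_E}\bigl(e^{F(x)}-1\bigr),\qquad F(x) := E_2(x) - R(x),
\]
obtained by taking logarithms in $\prod_{p\leq x}(1-1/p)^{-1}$, expanding $-\log(1-1/p)$ as a geometric series, and invoking the definition of $\mathcal{E}_2$; here $R(x) := \sum_{p>x}\sum_{n\geq 2}(np^n)^{-1}$. A direct estimate via the prime number theorem gives $R(x)\sim\tfrac{1}{2x\log x}$, whence $\int_2^X R(x)\,dx = \tfrac{1}{2}\log\log X + O(1)$.

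For the case $\Theta=1/2$, von Koch's bound under RH yields $F(x) = O(x^{-1/2}\log x)$, which makes the Taylor expansion $e^F - 1 = F + \tfrac12 F^2 + O(F^3)$ legitimate, with $\int_2^X |F(x)|^3\,dx = O(1)$. This gives
\[
\int_2^X E_3(x)\,dx = e^{\gamma_E}\int_2^X E_2(x)\,dx - e^{\gamma_E}\int_2^X R(x)\,dx + \tfrac{e^{\gamma_E}}{2}\int_2^X F(x)^2\,dx + O(1).
\]
The first integral is positive by Theorem~\ref{thm E_1 E_2} and the third is non-negative, while the subtracted term contributes only $\tfrac{1}{2}e^{\gamma_E}\log\log X$. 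To obtain positivity uniformly in $X$, I would extract from the explicit-formula argument underlying Theorem~\ref{thm E_1 E_2} a quantitative lower bound on $\int_2^X E_2(x)\,dx$ of the order of its natural amplitude $X^{1/2}/\log X$, enough to dominate the $\log\log X$ correction for $X$ large; for $X\leq 10^8$, the pointwise positivity of $E_3$ from Rosser--Schoenfeld settles matters trivially. The principal obstacle is the uniform quantitative control on $\int_2^X E_2(x)\,dx$ in the intermediate range, where its oscillations from zeros may bring it close to zero.

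For $\Theta>1/2$ (and for $\Theta=1$ under Assumption~\ref{assumption}), I would apply a Landau-type oscillation argument. Via the identity above and the explicit formula for $E_2$, the integral $\int_2^X E_3(x)\,dx$ contains a main oscillating contribution of amplitude $\asymp X^\Theta/\log X$ coming from zeros $\rho$ of $\zeta$ with $\Re\rho=\Theta$, and this dominates both the $R$ correction and the $F^2$ term. If $\int_2^X E_3(x)\,dx$ were of one sign for large $X$, the associated Mellin transform would extend analytically past $\Re s=\Theta$, contradicting the presence of a zero with $\Re\rho=\Theta$. For $\Theta=1$ no isolated extremal zero is available, and the contradiction must instead come from the collective contribution of zeros approaching the $1$-line; this is where Assumption~\ref{assumption} enters, using $\eta$ to build a weight or contour that still extracts a detectable oscillating term. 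The hardest part of the whole argument is precisely this boundary case: the accumulation of zeros near the $1$-line must be translated into concrete sign changes of a slowly varying function, and the growth conditions on $\eta'$ in Assumption~\ref{assumption} are exactly what makes this translation possible.
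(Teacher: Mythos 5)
Your reduction is the right one, and in fact more careful than the paper's own display: the exact identity is $E_3(x)=e^{\gamma_E}\bigl(e^{E_2(x)-R(x)}-1\bigr)$ with $R(x)=\sum_{p>x}\sum_{n\geq 2}(np^n)^{-1}$, and you correctly track the divergent correction $\int_2^X R(x)\md x\sim\tfrac12\log\log X$, which the paper absorbs silently. For $\Theta=1/2$ your plan closes: the ``principal obstacle'' you flag is not really one, because the sufficiency proof of Theorem~\ref{thm E_1 E_2} already yields the uniform quantitative bound $\int_2^X E_2(x)\md x>1.73\sqrt{X}/\log X+O(1)$ for $X\geq 10^8$ (it equals $-X$ times a tail integral shown there to be $<-1.73/(\sqrt{X}\log X)$), which swamps the $\log\log X$ correction; for smaller $X$ one uses the pointwise positivity of $E_3$. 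For $1/2<\Theta<1$ your sketch matches the paper in spirit, but two points must be made precise: the supremum $\Theta$ need not be attained, so what one actually extracts is $\Omega_{\pm}(X^{\Theta-\epsilon})$ via Landau's theorem applied to the auxiliary function $X^{\Theta-1-\epsilon}+\int_X^\infty(\Pi(x)-\li(x))x^{-2}\md x$ --- not to the Mellin transform of $\int_2^X E_3$ itself, whose singularity structure is opaque because of the exponential; and the domination of the nonlinear terms needs a quantitative statement, e.g.\ $|E_2(x)|<1$ gives $\sum_{n\geq 2}\frac{1}{n!}\int_2^X|E_2|^n\ll\int_2^X E_2^2\ll_\Theta X^{2\Theta-1}=o(X^{\Theta-\epsilon})$.

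The genuine gap is the case $\Theta=1$, where you offer only the assertion that Assumption~\ref{assumption} should let one ``build a weight or contour,'' with no mechanism. This is precisely the regime where $\int_2^X E_2^2$ and the extreme values of $\int_2^X E_2$ become comparable in size, so the reduction to the linear term no longer comes for free and the theorem hinges on winning this race. The paper's resolution has two concrete ingredients you would need to supply: (1) a localized Pintz-type oscillation theorem showing that a single zero $\rho_0=\beta_0+i\gamma_0$ with $\beta_0>1/2+\epsilon$ forces $\Delta_1(x_2):=\int_2^{x_2}E_2(t)\md t<-x_2^{\beta_0}/(\gamma_0^{2+\epsilon}\log x_2)$ for some $x_2$ in every window $[X,X^{1+\epsilon}]$ with $X>\gamma_0^{C_2(\epsilon)}$, proved by the power-sum method applied to $H(s)=\zeta'/\zeta+\zeta-1$; and (2) the upper bound $\Delta_2(x):=\int_2^x E_2^2\ll x\,e^{-2(1-\epsilon)\omega(x)}/(\log x)^2$ with $\omega(x)=\min_{t\geq 1}(\eta(t)\log x+\log t)$, coming from the zero-free region. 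Conditions (ii)--(iv) of Assumption~\ref{assumption} then enter not as vague regularity but through a specific choice: pick $\gamma_0$ with $\beta_0>1-K\eta(\gamma_0)$ and set $\log X_0=1/(|\eta'(\gamma_0)|\gamma_0)$, whereupon comparing exponents gives $-\Delta_1(x_2)>\Delta_2(x_2)$ provided $(2(1-\epsilon)-K(1+\epsilon))\,\eta(\gamma_0)/(|\eta'(\gamma_0)|\gamma_0)>3\epsilon\log\gamma_0$, which condition (iv) guarantees for $\epsilon$ small. Note also that the lower bound on the oscillation comes from a \emph{single} zero near the $1$-line, not from a collective contribution as you suggest; the collective information (the zero-free region) is used only for the upper bound on $\Delta_2$. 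Without some version of this single-zero lower bound and the explicit balancing, your $\Theta=1$ case does not go through.
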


The proof of Theorem ~\ref{thm E_1 E_2} relies on the explicit formula and Landau's oscillation theorem. For Theorem ~\ref{thm E_3} we have to carefully examine a race between the first and second moments of $E_2$. In particular, these results show conditionally that on average each $E_i(x)$ is positive on the interval $[2,X]$ for any $X>2$. More precisely, with the definitions
\begin{equation}\label{def f_i}
\begin{split}
    f_1(X):=&\frac{1}{\sqrt{X}}\int_2^X E_1(x)\md x,\\
    f_2(X):=&\frac{\log X}{\sqrt{X}}\int_2^X E_2(x)\md x,\\
    f_3(X):=&\frac{\log X}{e^{\gamma_E} \sqrt{X}}\int_2^X E_3(x)\md x,
\end{split}
\end{equation}
it can be seen from the proofs that
\begin{equation}\label{bounds on f_i}
    2-B_1\leq \liminf_{X\to \infty}f_i(X)< \limsup_{X\to \infty}f_i(X)=2+B_1 
\end{equation}
where $B_1=-2\frac{\xi'}{\xi}(0)=2+\gamma_E-\log 4\pi=0.0461\ldots$. Here $\xi(s)=\frac{s(s-1)}{2}\pi^{-\frac{s}{2}}\Gamma(\frac{s}{2})\zeta(s)$ denotes the completed $\zeta$-function.
\begin{figure}[ht]
    \centering
    \includegraphics[width=9cm]{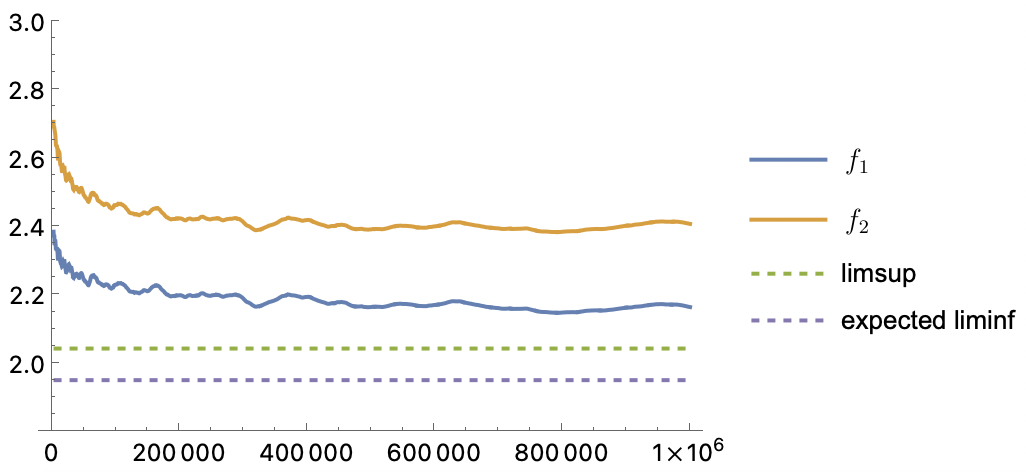}
    \caption{A plot of $f_1(X)$ and $f_2(X)$ for $2000\leq X\leq 10^6$.}
    Note that at $X=10^6$ they are still far above their limsup.
    \label{Figure_1}
\end{figure}
If we additionally assume the Linear Independence conjecture (LI), that is, if the distinct positive ordinates of the non-trivial zeros of $\zeta(s)$ are linearly independent over the rationals (note that non-simple zeros are permitted here), then the first inequality in \eqref{bounds on f_i} can be strengthened to an equality. 

From \eqref{bounds on f_i} it is not hard to derive a localized version of Theorems~\ref{thm E_1 E_2} and \ref{thm E_3}:
\begin{corollary}
    Assuming RH, for each $i\in \{1,2,3\}$ and any positive constant $c<\big(\frac{2-B_1}{2+B_1}\big)^2=0.9548\ldots$,
    \[
    \int_{cX}^X E_i(x)\md x> 0\hspace{0.5cm} \text{for $X\geq X_0(c)$}.
    \]
\end{corollary}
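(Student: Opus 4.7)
The plan is to reduce the localized integral to a difference of two instances of $\int_2^{\cdot}E_i$, and then apply the two-sided asymptotic bounds already recorded in \eqref{bounds on f_i}. Writing
\[
\int_{cX}^X E_i(x)\md x \;=\; \int_2^X E_i(x)\md x \;-\; \int_2^{cX} E_i(x)\md x,
\]
I would invert the definitions \eqref{def f_i} to express the right-hand side in terms of $f_i(X)$ and $f_i(cX)$. For $i=1$ this gives the clean identity
\[
\int_{cX}^X E_1(x)\md x \;=\; \sqrt{X}\bigl(f_1(X)-\sqrt{c}\,f_1(cX)\bigr),
\]
while for $i\in\{2,3\}$ one picks up the harmless factor $\log X/\log(cX)=1+O(1/\log X)$, namely
\[
\int_{cX}^X E_i(x)\md x \;=\; \frac{\kappa_i\sqrt{X}}{\log X}\left(f_i(X)-\sqrt{c}\,\tfrac{\log X}{\log cX}\,f_i(cX)\right),
\]
with $\kappa_2=1$ and $\kappa_3=e^{\gamma_E}$.

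Under RH, \eqref{bounds on f_i} asserts that for every $\varepsilon>0$ one has $f_i(Y)\ge 2-B_1-\varepsilon$ and $f_i(Y)\le 2+B_1+\varepsilon$ for all sufficiently large $Y$. Applying these at $Y=X$ and $Y=cX$ yields
\[
f_i(X)-\sqrt{c}\,f_i(cX) \;\ge\; (2-B_1-\varepsilon)-\sqrt{c}\,(2+B_1+\varepsilon).
\]
The right-hand side is strictly positive once $\varepsilon$ is small, precisely under the hypothesis $\sqrt{c}<\frac{2-B_1}{2+B_1}$, i.e.\ $c<\bigl(\frac{2-B_1}{2+B_1}\bigr)^2$. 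For $i\in\{2,3\}$ the extra factor $\log X/\log(cX)$ can be absorbed into $\varepsilon$ by taking $X$ large enough (depending on $c$), since it tends to $1$ with $c$ fixed. Hence in all three cases the bracket is bounded below by a positive constant for $X\ge X_0(c)$, and the desired positivity follows.

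There is no real obstacle here; the corollary is essentially a mechanical consequence of \eqref{bounds on f_i}. The only mild care needed is to ensure that the logarithmic discrepancy $\log X/\log(cX)\to 1$ does not erode the gap $(2-B_1)-\sqrt{c}\,(2+B_1)$, which it manifestly does not once $X$ is large in terms of $c$ and the slack $\varepsilon$ is chosen appropriately.
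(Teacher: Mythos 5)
Your proposal is correct and is precisely the argument the paper intends: the corollary is stated as an immediate consequence of \eqref{bounds on f_i}, obtained by splitting $\int_{cX}^X=\int_2^X-\int_2^{cX}$, rescaling via \eqref{def f_i}, and using the liminf bound at $X$ against the limsup bound at $cX$, with the factor $\log X/\log(cX)\to 1$ absorbed for $i\in\{2,3\}$. The threshold $c<\big(\tfrac{2-B_1}{2+B_1}\big)^2$ emerges exactly as you derive it.
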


Next we consider the analogue of Mertens' theorems for real primitive Dirichlet characters.

\subsection{Real primitive characters}
For a fundamental discriminant $d$, let $\chi_d:=\legendre{d}{\cdot}$ represent the associated Kronecker symbol. Analogously, one can formulate the Generalized Riemann hypothesis (GRH) and LI for the associated Dirichlet $L$-function $L(s,\chi_d)$. Put
\begin{align*}
    E_1(x;d):=&\sum_{p\leq x}\frac{\chi_d(p)\log p}{p}-\mc{E}_{1}(d),\\
    E_2(x;d):=&\sum_{p\leq x}\frac{\chi_d(p)}{p}-\mc{E}_{2}(d),\\
    E_3(x;d):=&\prod_{p\leq x} \left(1-\frac{\chi_d(p)}{p}\right)^{-1}-L(1,\chi_d),\\
\end{align*}
where
\[
\mc{E}_{1}(d):=-\frac{L'}{L}(1,\chi_d)-\sum_{p}\sum_{n=2}^\infty \frac{\chi_d(p)^n\log p}{p^n},\hspace{0.3cm}\mc{E}_{2}(d):=\log L(1,\chi_d)-\sum_{p}\sum_{n=2}^\infty \frac{\chi_d(p)^n}{np^n}.
\]
One can verify that $E_i(x;d)=o(1)$ as $x\to \infty$.

Let $\mc{D}$ be the union of the two finite sets of fundamental discriminants given in Tables~\ref{table d>0} and ~\ref{table d<0}. We obtain the following analogue of Theorem~\ref{thm E_1 E_2}:
\begin{theorem}\label{thm real prim char}
    For each $i\in \{1,2\}$ and $d\in \mc{D}$, the following are equivalent:
    \begin{enumerate}[label=(\alph*)]
        \item GRH holds for $L(s,\chi_d)$.
        \item $\int_2^X E_i(x;d)\md x>0$ for all sufficiently large $X$.
    \end{enumerate}
    If $d\not \in \mc{D}$ and LI holds for $L(s,\chi_d)$, then $(a)$ implies that $(b)$ is false. In this sense, $\mc{D}$ is the complete set of fundamental discriminants for which the above equivalence holds true. 
\end{theorem}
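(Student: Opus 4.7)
The plan is to adapt the strategy used for Theorem~\ref{thm E_1 E_2}, working with $L(s,\chi_d)$ in place of $\zeta$. Writing $\theta(x,\chi_d):=\sum_{p\le x}\chi_d(p)\log p$, partial summation together with the identity $\mathcal{E}_1(d)=\lim_{x\to\infty}\sum_{p\le x}\chi_d(p)\log p/p$ yields
\[
E_1(x;d)=\frac{\theta(x,\chi_d)}{x}-\int_x^{\infty}\frac{\theta(t,\chi_d)}{t^2}\md t,
\]
and an analogous identity holds for $E_2(x;d)$. Substituting the explicit formula $\theta(x,\chi_d)=-\sum_\rho x^\rho/\rho+O(\log x)$ and integrating from $2$ to $X$ leads, under GRH, to an asymptotic expansion of the form
\[
\frac{(\log X)^{i-1}}{\sqrt{X}}\int_2^X E_i(x;d)\md x \;=\; B_i(d)\;-\;2\Re\sum_{\gamma>0}\frac{c_i(\gamma)\,X^{i\gamma}}{\bigl(\tfrac12+i\gamma\bigr)\bigl(\tfrac32+i\gamma\bigr)}\;+\;o(1)
\]
for $i\in\{1,2\}$, where $\gamma$ runs over positive ordinates of zeros of $L(s,\chi_d)$, the $c_i(\gamma)$ are unimodular factors, and the bias constant $B_i(d)$ is built from $-\Lambda'(0,\chi_d)/\Lambda(0,\chi_d)$ together with contributions from the trivial zeros of the completed $L$-function. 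Since the trivial-zero pattern of $\Lambda(s,\chi_d)$ depends on the parity of $\chi_d$, the admissible discriminants split into the cases $d>0$ (even $\chi_d$) and $d<0$ (odd $\chi_d$), reflected in Tables~\ref{table d>0} and \ref{table d<0}.

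For the implication (a)$\Rightarrow$(b) when $d\in\mathcal{D}$, the set $\mathcal{D}$ is defined precisely as those fundamental discriminants for which $B_i(d)$ strictly exceeds the oscillation supremum
\[
2\sum_{\gamma>0}\frac{|c_i(\gamma)|}{\bigl|(\tfrac12+i\gamma)(\tfrac32+i\gamma)\bigr|}\qquad\text{for both }i\in\{1,2\}.
\]
This strict inequality is certified for each $d\in\mathcal{D}$ by a finite computation: high-precision evaluation of the low-lying zeros of $L(s,\chi_d)$, combined with an effective Riemann--von Mangoldt-type tail estimate for the remaining zeros. Under GRH the displayed asymptotic then forces $\int_2^X E_i(x;d)\md x$ to be bounded below by a positive multiple of $\sqrt{X}/(\log X)^{i-1}$ for all $X$ sufficiently large, yielding (b).

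For the converse (b)$\Rightarrow$(a), if GRH fails and $\Theta_d:=\sup\{\Re\rho:L(\rho,\chi_d)=0\}>\tfrac12$, then the explicit formula together with Landau's oscillation theorem applied to the Mellin transform of $\int_2^X E_i(x;d)\md x$ prevents this integral from being of one sign for all large $X$, in direct parallel with the $\zeta$-function argument behind Theorem~\ref{thm E_1 E_2}. The final assertion of the theorem --- that for $d\notin\mathcal{D}$ under LI, GRH implies (b) is false --- follows from Kronecker--Weyl: LI makes the vectors $(X^{i\gamma})_\gamma$ equidistribute on the infinite torus, so the oscillating sum attains every value in its (closed) range, and by the very definition of $\mathcal{D}$ this range contains a value whose sum with $B_i(d)$ is negative; hence the normalised integral is negative along some sequence $X_n\to\infty$.

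The main obstacle is the effective, case-by-case verification that $\mathcal{D}$ is exactly the set stated. This demands high-precision computation of the low zeros of $L(s,\chi_d)$ for each candidate $d$, an explicit tail estimate for $\sum_{\gamma>T}|c_i(\gamma)|/|(\tfrac12+i\gamma)(\tfrac32+i\gamma)|$ depending on the conductor $|d|$, and a separate parity-based accounting of the trivial-zero contribution to $B_i(d)$. The discriminants in $\mathcal{D}$ are precisely those narrowly winning the bias-versus-oscillation race simultaneously for $i=1$ and $i=2$; certifying this finite list forms the computational heart of the proof.
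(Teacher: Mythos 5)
Your overall architecture (explicit formula, a race between a constant bias and the maximal oscillation of a sum over zeros, a finite computation to certify $\mc{D}$, Landau's theorem for the converse, Kronecker--Weyl under LI) matches the paper's. But the proposal misidentifies the source of the bias, and this is not a cosmetic slip: it is the mechanism that makes the theorem true. The quantity $-\frac{\xi'}{\xi}(0,\chi_d)$ (your $-\Lambda'(0,\chi_d)/\Lambda(0,\chi_d)$) equals $\sum_\rho \Re(1/\rho)=B_{\chi_d}/2$, i.e.\ it is the \emph{oscillation amplitude} $\sum_\gamma (1/4+\gamma^2)^{-1}$ sitting on the other side of the inequality, not the bias term; and the trivial zeros of $L(s,\chi_d)$ contribute only $O(1)$, not a growing main term. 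The actual bias comes from prime squares: because $\chi_d$ is real, $\psi(x,\chi_d)-\theta(x,\chi_d)=\sqrt{x}+o(\sqrt{x})$, and feeding this through $\int_2^X E_1(x;d)\md x=-X\int_X^\infty \theta(x,\chi_d)x^{-2}\md x+2\mc{E}_1(d)$ produces $-\sum_\rho X^\rho/(\rho(\rho-1))+2\sqrt{X}+o(\sqrt{X})$. (Note also that the zero factor is $\rho(\rho-1)=(\tfrac12+i\gamma)(-\tfrac12+i\gamma)$, of modulus $\tfrac14+\gamma^2$, not $(\tfrac12+i\gamma)(\tfrac32+i\gamma)$.) The criterion defining $\mc{D}$ is therefore the single inequality $B_{\chi_d}<2$, the same for $i=1$ and $i=2$; without the prime-square term your expansion has no bias at all and the argument cannot get started.

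The converse direction also has a gap specific to real characters. Landau's oscillation theorem is applied to $F(s)=\frac{1}{s-\Theta_{\chi_d}+\epsilon}+(\text{a term involving }\frac{L'(s,\chi_d)}{sL(s,\chi_d)})$, and one needs this function to be analytic at every \emph{real} $s>\Theta_{\chi_d}-\epsilon$ while having a singularity somewhere on the line $\sigma=\Theta_{\chi_d}-\epsilon$'s right. If $L(s,\chi_d)$ has a real zero at $\Theta_{\chi_d}$ (a possibility one cannot exclude unconditionally for quadratic characters --- this is the Siegel-zero issue), $F$ acquires a real singularity and Landau's theorem yields no contradiction. The paper handles this by observing that the implication $(b)\Rightarrow(a)$ holds whenever $L(\Theta_{\chi_d},\chi_d)\neq 0$, and that Platt's verification of the absence of real zeros for conductors up to $400{,}000$ covers every $d\in\mc{D}$ (all of which satisfy $|d|\leq 1201$). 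Your proposal asserts the Landau step ``in direct parallel with the $\zeta$-function argument'' without noticing that the parallel breaks precisely at this point, since $\zeta$ has no zeros in $(0,1)$ but $L(s,\chi_d)$ might.
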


\begin{remark}\label{remark real prim char}
    It will be noted in the proof of Theorem~\ref{thm real prim char} that the implication $(b)\implies (a)$ is true for every $d$ such that $L(\Theta_{\chi_d},\chi_d)\neq 0$ where $\Theta_{\chi_d}:=\sup\{\Re(\rho): L(\rho,\chi_d)=0\}$. In particular, it is true whenever $L(s,\chi_d)$ has no real non-trivial zero.
\end{remark}

Define $f_i(X;d)$ as in \eqref{def f_i} with the sole change that $e^{\gamma_E}$ is replaced by $L(1,\chi_d)$ in $f_3(X;d)$. We shall see that for each $i\in\{1,2,3\}$ and $d\in \mc{D}$, GRH for $L(s,\chi_d)$ implies that
\begin{equation*}
    2-B_{\chi_d}\leq \liminf_{X\to \infty}f_i(X;d)< \limsup_{X\to \infty}f_i(X;d)=2+B_{\chi_d}, 
\end{equation*}
where $B_{\chi_d}=-2\frac{\xi'}{\xi}(0,\chi_d)$ with $\xi(s,\chi_d)$ denoting the completed $L$-function (see, e.g., \cite[p. 71]{Dav}). Again the first inequality can be replaced by an equality if LI is further assumed.

\begin{remark}
    One may derive an analogue of Theorem~\ref{thm E_3} in the case $\Theta>1/2$, but we shall omit the details.
\end{remark}

Finally, we generalize the preceding discussions to the setting of arithmetic progressions.
\subsection{Arithmetic progressions}
For $q>1$, $(a,q)=1$, let
\begin{align*}
    E_1(x;q,a):=&\sum_{\substack{p\leq x\\p\equiv a\ (\mr{mod}\ q)}}\frac{\log p}{p}-\frac{\log x}{\phi(q)}-\mc{E}_{1}(q,a),\\
    E_2(x;q,a):=&\sum_{\substack{p\leq x\\p\equiv a\ (\mr{mod}\ q)}}\frac{1}{p}-\frac{\log\log x}{\phi(q)}-\mc{E}_{2}(q,a),\\
    E_3(x;q,a):=&\frac{1}{(\log x)^{1/\phi(q)}}\prod_{\substack{p\leq x\\p\equiv a\ (\mr{mod}\ q)}}\left(1-\frac{1}{p}\right)^{-1}-\mc{E}_{3}(q,a),\\
\end{align*}
where $\phi(q)$ denotes Euler's totient function and $\mc{E}_{i}(q,a)$ are appropriate constants that do not affect our discussion \footnote{Languasco and Zaccagnini \cite[\S{6}]{LZ1} gave an elementary expression for $\mc{E}_{3}(q,a)$, which can then be used to express $\mc{E}_{2}(q,a)$ (see also \cite{LZ2}).}. Let
\begin{equation}\label{set Q}
    \mc{Q}=\{2,3,4,5,6,7,8,9,10,12,14,15,16,18,20, 21,24,28,30,36,40,42,48,60\}.
\end{equation}

\begin{theorem}\label{thm arith prog}
    For each $i\in \{1,2\}$ and $q\in \mc{Q}$, the following are equivalent:
    \begin{enumerate}[label=(\alph*)]
        \item GRH holds for $L(s,\chi)$ for each character $\chi\ (\mr{mod}\ q)$;
        \item $\int_2^X E_i(x;q,1)\md x>0$ for all sufficiently large $X$.
    \end{enumerate}
    If $q\not \in \mc{Q}$ and the set 
    \begin{equation}\label{LI for q}
        \bigg\{\gamma:\prod_{\chi\ (\mr{mod}\ q)}L(\beta+i\gamma,\chi)=0, \:0<\beta <1,\:\gamma\geq 0\bigg\}
    \end{equation}
    is linearly independent
    \footnote{Again we only require linear independence among the distinct non-negative ordinates of the non-trivial zeros.} over $\bb{Q}$,
    then $(a)$ implies that $(b)$ is false. In this sense the set $\mc{Q}$ is complete.
\end{theorem}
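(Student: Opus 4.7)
My plan is to reduce Theorem~\ref{thm arith prog} to the primitive-character statements of Theorem~\ref{thm real prim char} via character orthogonality, and then to pin down $\mc{Q}$ through an explicit computation of an arithmetic bias. Using $\mathbf{1}_{p \equiv 1\,(\mr{mod}\,q)} = \frac{1}{\phi(q)}\sum_{\chi\,(\mr{mod}\,q)}\chi(p)$ gives
\[
\sum_{\substack{p\leq x\\ p\equiv 1\,(\mr{mod}\,q)}}\frac{\log p}{p} = \frac{1}{\phi(q)}\sum_{\chi\,(\mr{mod}\,q)}\sum_{p\leq x}\frac{\chi(p)\log p}{p},
\]
and analogously for the sum defining $E_2(x;q,1)$. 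For each non-principal $\chi$, I pass to the primitive character $\chi^*$ inducing $\chi$ (which has the same non-trivial $L$-zeros); the discrepancy is supported on the finite set of primes dividing $q$ and is negligible after integration and normalization. The principal character contributes $E_i(x)$ itself.

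Next, applying the explicit formula to each $E_i(x;\chi^*)$ and tracking the origin of the constant $2$ in \eqref{bounds on f_i}, I show that this constant arises from a systematic $1/\sqrt{x}$-bias in $E_i(x)$ coming from the contribution of prime squares (equivalently, from the $-\sqrt{x}$ term in the expansion of $\theta(x)$). The same $1/\sqrt{x}$-bias appears for each real primitive non-principal $\chi^*$, since $\chi^*(p)^2 = 1$, while for complex $\chi^*$ the analogous contribution cancels on average. Setting
\[
N_q := \#\{a\,(\mr{mod}\,q) : a^2 \equiv 1\,(\mr{mod}\,q)\}
\]
--- which is also the number of real characters modulo $q$ --- and combining over all characters yields a net bias of $N_q/(\phi(q)\sqrt{x})$ in $E_1(x;q,1)$, and analogously for $E_2$. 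Integrating from $2$ to $X$, renormalizing, and pairing conjugate zeros then gives, under GRH,
\[
\limsup_{X\to\infty} f_i(X;q,1) = \frac{2N_q}{\phi(q)} + B_q,\qquad \liminf_{X\to\infty} f_i(X;q,1) \geq \frac{2N_q}{\phi(q)} - B_q,
\]
where $B_q := \frac{1}{\phi(q)}\sum_{\chi\,(\mr{mod}\,q)} B_{\chi^*}$ with $B_{\chi^*} := -2\frac{\xi'}{\xi}(0,\chi^*)$ and $B_{\chi_0^*}=B_1$; equality in the second inequality holds under the linear-independence assumption \eqref{LI for q}.

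The implication $(a)\Rightarrow(b)$ for $q \in \mc{Q}$ then follows provided $2N_q/\phi(q) > B_q$, so $\mc{Q}$ is defined as the set of moduli satisfying this strict inequality. Each $B_{\chi^*}$ is computed via the functional equation for $\xi(s,\chi^*)$, yielding an expression involving $\log(q_{\chi^*}/\pi)$, digamma values at $0$ or $1/2$, and $L'/L(1,\chi^*)$; a direct numerical verification confirms the inequality for each of the $24$ moduli listed in \eqref{set Q}. An analytical tail estimate --- using that $B_q$ grows at least like $\log q$ on average while $N_q/\phi(q) \to 0$ as $q\to\infty$ --- shows that no other moduli qualify. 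For $q\notin\mc{Q}$, under \eqref{LI for q}, $\liminf = 2N_q/\phi(q) - B_q \leq 0$, so $(b)$ fails.

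For the converse $(b)\Rightarrow(a)$, I argue contrapositively via Landau's oscillation theorem, as in the proofs of Theorems~\ref{thm E_1 E_2} and \ref{thm real prim char}: if some $L(s,\chi)$ with $\chi\,(\mr{mod}\,q)$ has a non-trivial zero $\rho$ with $\Re\rho>1/2$, then the Mellin transform of $\int_2^X E_i(x;q,1)\md x$ has its rightmost singularities at complex conjugate pairs off the real axis, forcing infinitely many sign changes. The main obstacle is the finite computation identifying $\mc{Q}$: one needs accurate numerical values of $L'/L(1,\chi^*)$ for all primitive characters of conductor up to a provable threshold, careful bookkeeping for even versus odd characters in the functional equation, and a uniform tail estimate eliminating all larger $q$ from consideration.
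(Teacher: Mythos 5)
Your overall strategy coincides with the paper's: orthogonality reduces the problem to Dirichlet $L$-functions, the $\sqrt{x}$ bias coming from prime squares (equivalently, from \eqref{error psi-theta arith pro}) produces the constant $2C(q,1)/\phi(q)=2N_q/\phi(q)$, GRH bounds the zero contribution by $B_q$, the criterion $2C(q,1)>B_q$ carves out $\mc{Q}$, Kronecker's theorem under \eqref{LI for q} gives completeness, and Landau's oscillation theorem gives $(b)\Rightarrow(a)$. Two points, however, are genuine gaps as written.

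First, in the converse you assert that the Mellin transform has ``its rightmost singularities at complex conjugate pairs off the real axis.'' That is not automatic: if $\prod_{\chi\ (\mr{mod}\ q)}L(\Theta_q,\chi)=0$, i.e.\ some $L(s,\chi)$ has a \emph{real} zero at the supremum abscissa, then the rightmost singularity is on the real axis with residue of a fixed sign, and the Landau argument no longer yields oscillation in both directions. The paper isolates exactly this hypothesis ($\prod_{\chi}L(\Theta_q,\chi)\neq 0$) and discharges it for $q\in\mc{Q}$ via Platt's verification that no $L(s,\chi)$ of small modulus has a real zero in $(0,1)$; you need to import that input. (This is also where $a=1$ is used: the coefficients $\ov{\chi}(1)=1$ prevent the residues at a zero shared by several characters from cancelling, which for general $a$ could remove the singularity altogether.) Second, your elimination of $q\notin\mc{Q}$ rests on ``$B_q$ grows at least like $\log q$ \emph{on average},'' which is not enough: one needs an effective pointwise lower bound valid for \emph{every} $q$ past an explicit threshold. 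The paper obtains this from the explicit zero-counting estimate of Lemma~\ref{N(T,chi)}, which gives $B_q>n(q)\cdot(\text{explicit integral})>2C(q,1)$ for all nontrivial $q>2040$, leaving a finite computation; your route through $L'/L(1,\chi^*)$ would require a comparable uniform lower bound that you do not supply. Finally, ``$\liminf\leq 0$'' does not by itself refute $(b)$: the borderline case $B_q=2C(q,1)$ must be excluded numerically, as the paper notes, and the claimed equality $\limsup f_i=2N_q/\phi(q)+B_q$ under GRH alone is stronger than what Dirichlet's theorem gives when several $L$-functions are involved (the paper only claims an inequality there, with equality under \eqref{LI for q}).
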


\begin{remark}
    Similarly as in Remark~\ref{remark real prim char}, the implication $(b)\implies (a)$ is true for every $q$ such that $\prod_{\chi \ (\mr{mod}\ q)}L(\Theta_q,\chi)\neq 0$ where $\Theta_q:=\sup\{\Re(\rho):\prod_{\chi \ (\mr{mod}\ q)}L(\rho,\chi)=0\}$.
\end{remark}

Put $C(q,a):=\#\{b:1\leq b\leq q-1, b^2\equiv a \ (\mr{mod} \ q)\}$. After replacing $e^{\gamma_E}$ by $\mc{E}_{3}(q,a)$ in \eqref{def f_i}, the analogue of \eqref{bounds on f_i} is that for each $i\in\{1,2,3\}$ and $q\in \mc{Q}$, if GRH holds for all $\chi \ (\mr{mod}\ q)$, then
\begin{equation*}
    2C(q,a)-B_q\leq \liminf_{X\to \infty}f_i(X;q,a)<\limsup_{X\to \infty}f_i(X;q,a)\leq 2C(q,a)+B_q, 
\end{equation*}
where 
\[
B_q=-2\frac{\xi'}{\xi}(0)-2 \Re \sum_{\substack{\chi \ (\mr{mod}\ q)\\ \chi\neq \chi_0}}\frac{\xi}{\xi}'(0,\chi^*)
\]
with $\chi^*$ standing for the primitive character inducing $\chi$. The first and the third inequalities both become equalities if the LI condition \eqref{LI for q} holds for $q$.

\subsection{Notations and definitions}
Throughout the text $\rho=\beta+i\gamma$ will denote a non-trivial zero (that is, $0<\beta<1$) of $\zeta(s)$ or $L(s,\chi)$ depending on the context. It is always assumed that $q>1$, $(a,q)=1$ in the context of arithmetic progressions. We list some frequently used counting functions related to primes:
\begin{align*}
    \pi(x)=&\sum_{p\leq x}1, \hspace{0.2cm}\theta(x)=\sum_{p\leq x}\log p, \hspace{0.2cm}\psi(x)=\sum_{n\leq x}\Lambda(n), \hspace{0.2cm}\Pi(x)=\sum_{n\leq x}\frac{\Lambda(n)}{\log n},\\
    \pi(x,\chi)=&\sum_{p\leq x}\chi(p),\hspace{0.2cm} \theta(x,\chi)=\sum_{p\leq x} \chi(p)\log p,\hspace{0.2cm} \psi(x,\chi)=\sum_{n\leq x}\chi(n)\Lambda(n),\\
    \pi(x;q,a)=&\sum_{\substack{p\leq x\\p\equiv a \ (\mr{mod} \ q)}}1,\hspace{0.2cm} \theta(x;q,a)=\sum_{\substack{p\leq x\\p\equiv a \ (\mr{mod} \ q)}}\log p,\hspace{0.2cm} \psi(x;q,a)=\sum_{\substack{n\leq x\\n\equiv a \ (\mr{mod} \ q)}}\Lambda(n),
\end{align*}
where $\Lambda(n)$ is the von Mangoldt function that takes the value $\log p$ when $n$ is a power of the prime $p$ and 0 otherwise. 

To end this section we collect some useful facts that can be found in standard textbooks such as Davenport \cite{Dav} and Montgomery\textendash Vaughan \cite{MV}. 
\begin{lemma}
\begin{align}
    &\psi(x)=x-\sum_{\rho}\frac{x^\rho}{\rho}-\log 2\pi-\frac{1}{2}\log(1-x^{-2}),\hspace{0.3cm}\text{$x>1$ not a prime power};\label{explicit formula}\\
    &\psi(x,\chi)=-\sum_{\rho}\frac{x^{\rho}}{\rho}+O_q(\log x);\label{explicit formula for psi(x,chi)}\\
    &\psi(x)=x+O(\sqrt{x}(\log x)^2),\hspace{0.3cm}\text{assuming RH};\label{error psi(x)-x}\\
    &\psi(x)=\theta(x)+\sqrt{x}+o(\sqrt{x});\label{error psi-theta}\\
    &\psi(x,\chi)=\theta(x,\chi)+\sqrt{x}+o(\sqrt{x});\label{error psi(x,chi)-theta(x,chi)}\\
    &\pi(x)-\li(x)=O(\sqrt{x}\log x)\label{error pi-li RH},\hspace{0.3cm}\text{assuming RH};\\
    &\Pi(x)=\pi(x)+O\left(\frac{\sqrt{x}}{\log x}\right);\label{error Pi-pi}\\
    &\psi(x;q,a)=\frac{1}{\phi(q)}\sum_{\chi \ (\mr{mod}\ q)}\ov{\chi}(a)\psi(x,\chi)\label{orthogonality};\\
    &\psi(x;q,a)=\theta(x;q,a)+C(q,a)\frac{\sqrt{x}}{\phi(q)}+o(\sqrt{x}). \label{error psi-theta arith pro}
\end{align}
\end{lemma}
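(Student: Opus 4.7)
The plan is to treat each of the nine items as a classical result and to sketch its derivation, deferring detailed bookkeeping to the standard accounts in Davenport \cite{Dav} and Montgomery--Vaughan \cite{MV}, to which the author already refers. No single step is delicate, so the presentation will be organized by grouping together items that share a common method.

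First, I would establish the explicit formulas \eqref{explicit formula} and \eqref{explicit formula for psi(x,chi)} by applying truncated Perron inversion to $-\zeta'/\zeta(s)$ and $-L'/L(s,\chi)$ respectively, then shifting the contour to $-\infty$. The residue at $s=1$ contributes $x$ in the $\zeta$-case (and vanishes for non-principal $\chi$), the non-trivial zeros contribute $-\sum_\rho x^\rho/\rho$, and the trivial zeros together with the functional equation produce $-\log 2\pi - \tfrac{1}{2}\log(1-x^{-2})$ in \eqref{explicit formula} and get absorbed into the $O_q(\log x)$ term in \eqref{explicit formula for psi(x,chi)}. Standard bounds for the logarithmic derivatives on vertical lines justify the contour shift.

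Next, the RH estimate \eqref{error psi(x)-x} follows by truncating \eqref{explicit formula} at $|\gamma|\leq T$ with $T=\sqrt{x}$, using $N(T)=O(T\log T)$ and the bound $|x^\rho/\rho|\leq \sqrt{x}/|\gamma|$. For \eqref{error pi-li RH} I would first pass from $\psi$ to $\theta$ via \eqref{error psi-theta}, then apply partial summation in the form $\pi(x)=\theta(x)/\log x + \int_2^x \theta(t)/(t\log^2 t)\md t$ to convert the $\sqrt{x}(\log x)^2$ bound on $\theta(x)-x$ into the claimed $\sqrt{x}\log x$ bound on $\pi(x)-\li(x)$.

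Finally, for the remaining relations between counting functions I would use the elementary prime-power decomposition. Item \eqref{error psi-theta} becomes $\psi(x)-\theta(x)=\theta(\sqrt{x})+O(x^{1/3}\log x)=\sqrt{x}+o(\sqrt{x})$ by PNT, and \eqref{error Pi-pi} reduces similarly to $\tfrac{1}{2}\pi(\sqrt{x})+O(x^{1/3})$. The character version \eqref{error psi(x,chi)-theta(x,chi)} uses the same decomposition together with $\chi_d^2=\chi_0$, so the $k=2$ term contributes $\theta(\sqrt{x})+O(\log q)=\sqrt{x}+o(\sqrt{x})$ rather than an oscillating quantity. For \eqref{error psi-theta arith pro}, the $k=2$ term is $\sum_{b^2\equiv a\ (\mr{mod}\ q)}\theta(\sqrt{x};q,b)+o(\sqrt{x})$, and PNT for arithmetic progressions gives each $\theta(\sqrt{x};q,b)=\sqrt{x}/\phi(q)+o(\sqrt{x})$, producing the factor $C(q,a)/\phi(q)$. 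Item \eqref{orthogonality} is the standard orthogonality relation $\phi(q)^{-1}\sum_\chi \bar\chi(a)\chi(n)=\mathbf{1}_{n\equiv a \ (\mr{mod}\ q)}$ applied inside $\psi(x;q,a)=\sum_{n\leq x, n\equiv a}\Lambda(n)$. The hardest step, such as it is, would simply be the careful truncation in the explicit formulas; no genuine obstacle arises, since every ingredient is classical.
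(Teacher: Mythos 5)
Your proposal is correct, and it matches the paper's treatment: the paper states this lemma without proof, simply citing Davenport and Montgomery--Vaughan for these classical facts, and your sketches are exactly the standard textbook derivations (including the correct observation that \eqref{error psi(x,chi)-theta(x,chi)} relies on $\chi$ being real so that the $k=2$ prime-power term contributes $\theta(\sqrt{x})$ rather than an oscillating sum).
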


\section{The mean values of \texorpdfstring{$E_1$}{} and \texorpdfstring{$E_2$}{}: Proof of Theorem~\ref{thm E_1 E_2}, sufficiency}\label{section: proof of thm E_1 E_2 sufficiency}

In this section we prove the sufficiency of RH in Theorem~\ref{thm E_1 E_2}, that is, for each $i\in \{1,2\}$ we show under RH that $\int_2^X E_i(x)\md x>0$ for all $X>2$. Recall that $E_i(x)>0$ for $2\leq x\leq 10^8$, so we  henceforth assume that $X\geq 10^8$. For $i=1$, by partial summation we have
\begin{align*}
    \int_2^X E_1(x)\md x=&\int_2^X \left(\int_{2^-}^x \frac{\mr{d}\theta(y)}{y}\right)\md x-\int_2^X\log x\md x-\mc{E}_1(X-2)\\
    =&\int_{2^-}^X \frac{X-y}{y}\md \theta(y)-(X\log X-X)+(2\log 2-2)-\mc{E}_1(X-2)\\
    =&XE_1(X)-(\theta(X)-X)+2\log 2-2+2\mc{E}_1.
\end{align*}
We can rewrite $E_1(x)$ using (see \cite[(4.21)]{RS})
\begin{equation*}
\sum_{p\leq x}\frac{\log p}{p}=\log x+\mc{E}_1+\frac{\theta(x)-x}{x}-\int_x^\infty \frac{\theta(y)-y}{y^2}\md y,
\end{equation*}
which gives
\begin{align}\label{mean of e_1}
        \int_2^X E_1(x)\md x=-X\int_X^\infty \frac{\theta(x)-x}{x^2}\md x+2\log 2-2+2\mc{E}_1.
\end{align}
The integral above is the tail of the absolutely convergent integral 
\[
\int_2^\infty \frac{\theta(x)-x}{x^2}\md x=\log 2-1+\mc{E}_1=-1.6394\ldots.
\]
From \eqref{explicit formula} and the bound \cite[(3.37)]{RS}
\begin{equation}\label{RS psi-theta}
    \psi(x)-\theta(x)>0.98\sqrt{x}\hspace{0.5cm}\text{for $x\geq 121$},
\end{equation}
we deduce that 
\begin{align*}
    \int_X^\infty \frac{\theta(x)-x}{x^2}\md x<&\int_X^\infty \frac{\psi(x)-x}{x^2}\md x-\frac{1.96}{\sqrt{X}}\\
    <&\sum_{\rho}\frac{1}{\rho(\rho-1)}X^{\rho-1}-\frac{1.96}{\sqrt{X}}\\
    =&-\frac{1}{\sqrt{X}}\sum_{\gamma>0}\frac{2\cos(\gamma \log X)}{1/4+\gamma^2}-\frac{1.96}{\sqrt{X}}\\
    \leq & \frac{B_1-1.96}{\sqrt{X}},
\end{align*}
where
\begin{equation}\label{B_1}
    B_1=\sum_{\gamma }\frac{1}{1/4+\gamma^2}=2\sum_{\rho}\Re\left(\frac{1}{\rho}\right)=0.0461\ldots.
\end{equation}
It follows that $\int_2^X E_1(x)\md x>0$, as desired. We now turn to the case for $E_2$. 

\begin{lemma}\label{lemma for E_2}
Assuming RH,
    \[
    \int_2^x \frac{\theta(u)-u}{u(\log u)^2}\md u<0 \hspace{0.5cm}\text{for all}\hspace{0.2cm}x>2.
    \]
\end{lemma}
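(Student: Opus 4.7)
The plan is to mirror the approach used above for $E_1$: decompose $\theta(u) - u = (\psi(u) - u) - (\psi(u) - \theta(u))$, extract a definite negative main term from the $\psi - \theta$ part via the Rosser--Schoenfeld bound \eqref{RS psi-theta}, and dominate the $\psi - u$ part using the explicit formula \eqref{explicit formula}. For $x \ge 121$, nonnegativity of the integrand on $[2,121]$ combined with $\psi(u) - \theta(u) > 0.98\sqrt{u}$ on $u \ge 121$ and integration by parts yields
\[
\int_2^x \frac{\psi(u) - \theta(u)}{u(\log u)^2}\md u \;>\; 0.98 \int_{121}^x \frac{\md u}{\sqrt{u}(\log u)^2} \;=\; \frac{1.96\sqrt{x}}{(\log x)^2} + O\!\left(\frac{\sqrt{x}}{(\log x)^3}\right) + O(1).
\]

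For the first integral, substitute \eqref{explicit formula} and integrate termwise; the bounded contributions $-\log 2\pi - \tfrac12\log(1-u^{-2})$ integrate to $O(1)$, so the task reduces to estimating $\sum_\rho \tfrac{1}{\rho}\int_2^x u^{\rho-1}(\log u)^{-2}\md u$. The substitution $v = \log u$ followed by one integration by parts gives
\[
\frac{1}{\rho}\int_2^x \frac{u^{\rho-1}}{(\log u)^2}\md u = \frac{x^\rho}{\rho^2(\log x)^2} - \frac{2^\rho}{\rho^2(\log 2)^2} + \frac{2}{\rho^2}\int_{\log 2}^{\log x}\frac{e^{\rho v}}{v^3}\md v.
\]
Under RH, $|x^\rho| = \sqrt{x}$, and summing in absolute value (using $\sum_\rho |\rho|^{-2} = B_1$ from \eqref{B_1} and the uniform bound $|e^{\rho v}| = e^{v/2}$ in the remainder integral) yields
\[
\bigg|\int_2^x \frac{\psi(u) - u}{u(\log u)^2}\md u\bigg| \;\le\; \frac{B_1\sqrt{x}}{(\log x)^2} + O\!\left(\frac{\sqrt{x}}{(\log x)^3}\right) + O(1).
\]

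Combining these two estimates,
\[
\int_2^x \frac{\theta(u) - u}{u(\log u)^2}\md u \;\le\; \frac{(B_1 - 1.96)\sqrt{x}}{(\log x)^2} + O\!\left(\frac{\sqrt{x}}{(\log x)^3}\right) + O(1),
\]
which is strictly negative for $x$ exceeding some explicit threshold $X_0$, since $B_1 - 1.96 < 0$ with a wide margin. For $2 < x \le X_0$, direct numerical verification via the identity (obtained from integration by parts with $v = -1/\log u$)
\[
\int_2^x \frac{\theta(u) - u}{u(\log u)^2}\md u = \pi(x) - \li(x) - \frac{\theta(x) - x}{\log x} - \frac{2}{\log 2} + \li(2)
\]
combined with tabulated values of $\pi$ and $\theta$ in this range suffices. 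The main obstacle is to rigorously justify the termwise integration of the only conditionally convergent series $\sum_\rho u^\rho/\rho$, which requires the truncated form of the explicit formula (truncating at $|\gamma| \le T$ for a suitable $T = T(x)$, e.g.\ $T = x(\log x)^2$) together with careful bookkeeping of the resulting remainder, as well as pinning down an explicit value of $X_0$ so that the numerical verification is tractable.
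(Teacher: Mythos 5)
Your proposal is correct and follows essentially the same route as the paper: the decomposition $\theta(u)-u=(\psi(u)-u)-(\psi(u)-\theta(u))$, the Rosser--Schoenfeld bound \eqref{RS psi-theta} to extract the $-1.96\sqrt{x}/(\log x)^2$ main term, and termwise integration by parts of the explicit formula to bound the $\psi$-part by $B_1\sqrt{x}/(\log x)^2$ plus lower-order terms. The only place the paper is simpler is the initial range: rather than numerically verifying the integral for $2<x\le X_0$ via the $\pi(x)-\li(x)$ identity, it invokes $\theta(x)<x$ for $2\le x\le 10^8$ (Rosser--Schoenfeld, Theorem 18), so the integrand is pointwise negative there and the analytic estimate is only needed for $x\ge 10^8$, where the constant and $\sqrt{x}/(\log x)^3$ corrections are comfortably absorbed.
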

\begin{proof}
    Again we may assume that $x\geq 10^8$ since $\theta(x)<x$ for $2\leq x\leq 10^8$ (see \cite[Theorem 18]{RS}) \footnote{The known range of $x$ for which $\theta(x)<x$ has been enlarged to $2\leq x\leq 10^{19}$ in \cite{JB2}.}. We find by applying \eqref{RS psi-theta} that
    \begin{align*}
        \int_2^x \frac{\theta(u)-u}{u(\log u)^2}\md u<&\int_2^x \frac{\psi(u)-u}{u(\log u)^2}\md u-\int_{121}^x \frac{0.98}{\sqrt{u}(\log u)^2}\md u-\int_{2}^{121}\frac{ \psi(u)-\theta(u)}{\sqrt{u}(\log u)^2}\md u\\
        <&\sum_{\rho} \frac{x^\rho}{\rho^2(\log x)^2}-\sum_{\rho}\frac{2^\rho}{\rho^2(\log 2)^2}+2\sum_\rho \int_2^x \frac{u^{\rho-1}}{\rho^2(\log u)^3}\md u\\
        &\hspace{0.5cm}-1.96\frac{\sqrt{x}}{(\log x)^2}+1.96\frac{\sqrt{121}}{(\log 121)^2}-3.92\int_{121}^x \frac{1}{\sqrt{u}(\log u)^3}\md u\\
        \leq& (B_1-1.96)\frac{\sqrt{x}}{(\log x)^2}+B_1\frac{\sqrt{2}}{(\log 2)^2}+2B_1\int_2^{121} \frac{1}{\sqrt{u}(\log u)^3}\md u\\
        &\hspace{0.5cm}+1.96\frac{\sqrt{121}}{(\log 121)^2}+(2B_1-3.92)\int_{121}^x \frac{1}{\sqrt{u}(\log u)^3}\md u\\
        <&(B_1-1.96)\frac{\sqrt{x}}{(\log x)^2}.
\end{align*}
\end{proof}

In the same vein as the earlier argument for $E_1$, the formula
\begin{equation}\label{sum of 1/p}
    \sum_{p\leq x}\frac{1}{p}=\log\log x+\mc{E}_2+\frac{\pi(x)-\li(x)}{x}-\int_x^\infty \frac{\pi(y)-\li(y)}{y^2}\md y
\end{equation}
(which can be seen, e.g., by applying \cite[(2.27)]{RS}) yields
\begin{align}\label{mean of e_2}
    \int_2^X E_2(x)\md x=&XE_2(X)-(\pi(X)-\li(X))+(2\mc{E}_2+2\log\log 2-\li(2))\notag\\
    =&-X\int_X^\infty \frac{\pi(x)-\li(x)}{x^2}\md x+(2\mc{E}_2+2\log\log 2-\li(2)).
\end{align}
The integral in question is the tail of
\[
\int_2^\infty \frac{\pi(x)-\li(x)}{x^2}\md x=\mc{E}_2+\log\log 2-\frac{\li(2)}{2}=-0.6275\ldots.
\]
Partial summation gives (see, e.g., \cite[(13.2)]{MV})
\begin{align}\label{pi-li interms of theta-x}
    \pi(x)-\li(x)=&\frac{\theta(x)-x}{\log x}+\int_2^x \frac{\theta(u)-u}{u(\log u)^2}\md u+\frac{2}{\log 2}-\li(2)\\
    <&\frac{\psi(x)-x}{\log x}-\frac{0.98\sqrt{x}}{\log x}\notag
\end{align}
via \eqref{RS psi-theta} and Lemma~\ref{lemma for E_2}. Consequently, 
\begin{align*}
    \int_X^\infty \frac{\pi(x)-\li(x)}{x^2}\md x<&\int_X^\infty \frac{\psi(x)-x}{x^2\log x}\md x-\frac{1.96}{\sqrt{X}\log X}+\int_X^\infty \frac{1.96}{x^{3/2}(\log x)^2}\md x\\
    <& \sum_\rho \frac{1}{\rho(\rho-1)}\frac{X^{\rho-1}}{\log X}-\sum_\rho \frac{1}{\rho(\rho-1)}\int_X^\infty\frac{x^{\rho-2}}{(\log x)^2}\md x\\
    &\hspace{1cm}-\frac{1.96}{\sqrt{X}\log X}+\int_X^\infty \frac{1.96}{x^{3/2}(\log x)^2}\md x\\
    \leq & \frac{B_1-1.96}{\sqrt{X}\log X}+\int_X^\infty\frac{B_1+1.96}{x^{3/2}(\log x)^2}\md x\\
    <& -\frac{1.73}{\sqrt{X}\log X},
\end{align*}
and so $\int_2^X E_2(x)\md x>0$. This concludes the proof. 

Restating these arguments asymptotically with \eqref{error psi-theta} in mind gives \eqref{bounds on f_i} for $i\in \{1,2\}$. The equality in \eqref{bounds on f_i} follows from Dirichlet's theorem; assuming LI, Kronecker's theorem implies that the first inequality can be replaced by an equality as well.

\section{The mean values of \texorpdfstring{$E_1$}{} and \texorpdfstring{$E_2$}{}: Proof of Theorem~\ref{thm E_1 E_2}, necessity}\label{section: proof of thm E_1 E_2 necessity}

Next we prove the necessity of RH in Theorem~\ref{thm E_1 E_2}. More precisely, if RH is false, we shall see that $\int_2^X E_i(x)\md x$ attains arbitrarily large positive and negative values for $i=1,2$. We start with a standard lemma on the values of two special integrals.

\begin{lemma}\label{lemma necessity for thm1}
\begin{enumerate}[label=(\alph*)]
    \item\label{a} \[
    \int_1^\infty \frac{\psi(x)-x}{x^2}\md x=-1-\gamma_E.
    \]
    \item\label{b}
    \[
    \int_2^\infty \frac{\Pi(x)-\li(x)}{x^2}\md x=\gamma_E+\log\log 2.
    \]
\end{enumerate}
\end{lemma}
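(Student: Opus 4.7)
Both integrals converge absolutely, since $\psi(x)-x$ and $\Pi(x)-\li(x)$ are $o(x)$ (the former by PNT, the latter via \eqref{error Pi-pi}), so I compute each as a limit over $[1,X]$ or $[2,X]$. For part (a), the plan is to apply Abel summation to $\sum_{n\leq x}\Lambda(n)/n$, giving $\sum_{n\leq x}\Lambda(n)/n=\psi(x)/x+\int_1^x\psi(u)/u^2\md u$. Since $\log x=\int_1^x u\cdot u^{-2}\md u$, subtracting yields
\[
\int_1^x\frac{\psi(u)-u}{u^2}\md u=\sum_{n\leq x}\frac{\Lambda(n)}{n}-\log x-\frac{\psi(x)}{x}.
\]
The prime number theorem gives $\psi(x)/x\to 1$, while the sharpening $\sum_{n\leq x}\Lambda(n)/n=\log x-\gamma_E+o(1)$ of Mertens' first theorem—obtained by adding to $\sum_{p\leq x}(\log p)/p=\log x+\mc{E}_1+o(1)$ the convergent double sum $\sum_p\sum_{k\geq 2}(\log p)/p^k=-\gamma_E-\mc{E}_1$ (an algebraic restatement of the definition of $\mc{E}_1$)—finishes the job and yields $-1-\gamma_E$.

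For part (b), I would run the same strategy. Abel summation with weights $1/n$ applied to $\Lambda(n)/\log n$ gives
\[
\int_2^x\frac{\Pi(u)}{u^2}\md u=\sum_{n\leq x}\frac{\Lambda(n)}{n\log n}-\frac{\Pi(x)}{x},
\]
and integration by parts with $\li'(u)=1/\log u$ yields $\int_2^x\li(u)/u^2\md u=\log\log x-\log\log 2+\li(2)/2-\li(x)/x$. The key input is the asymptotic
\[
\sum_{n\leq x}\frac{\Lambda(n)}{n\log n}=\sum_{p\leq x}\frac{1}{p}+\sum_{k\geq 2}\sum_{p^k\leq x}\frac{1}{kp^k}=\log\log x+\gamma_E+o(1),
\]
in which the first sum contributes $\log\log x+\mc{E}_2+o(1)$ by Mertens' second theorem and the convergent double sum equals $\gamma_E-\mc{E}_2$ by the defining relation of $\mc{E}_2$. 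Combining these with $\Pi(x)/x,\li(x)/x\to 0$ from PNT, the $\log\log x$ terms cancel and the remaining constants assemble to the stated value.

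\textbf{Main obstacle.} The argument is essentially bookkeeping, and no serious difficulty is expected; the only content-bearing step is isolating the additive constant $\gamma_E$ in the asymptotic for $\sum_{n\leq x}\Lambda(n)/(n\log n)$, which requires the prime/prime-power decomposition together with the algebraic identities built into the definitions of $\mc{E}_1$ and $\mc{E}_2$. Careful tracking of the endpoint contributions at $u=1,2$ is needed to pin down the precise claimed constants.
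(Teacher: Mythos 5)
Your route is genuinely different from the paper's. The paper evaluates both integrals through their Mellin transforms, quoting $\int_1^\infty (\psi(x)-x)x^{-s-1}\md x=-\tfrac{\zeta'(s)}{s\zeta(s)}-\tfrac{1}{s-1}$ and $\int_2^\infty \Pi(x)x^{-s-1}\md x=\tfrac{\log\zeta(s)}{s}$ and letting $s\to 1^+$ using the Laurent expansions of $\zeta'/\zeta$ and $\log\zeta$ at $s=1$; you instead work on the real line via Abel summation together with the PNT-strength Mertens asymptotics $\sum_{n\leq x}\Lambda(n)/n=\log x-\gamma_E+o(1)$ and $\sum_{n\leq x}\Lambda(n)/(n\log n)=\log\log x+\gamma_E+o(1)$, with the constants extracted from the defining relations of $\mc{E}_1$ and $\mc{E}_2$. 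The two arguments are of comparable depth (the paper's limit $s\to1^+$ also rests on an Abelian step that uses PNT), and yours has the virtue of making the dependence on PNT and on the definitions of $\mc{E}_1,\mc{E}_2$ explicit. Part (a) is complete and correct as written.

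Part (b), however, has an unresolved constant. Your own intermediate formula $\int_2^x \li(u)u^{-2}\md u=\log\log x-\log\log 2+\li(2)/2-\li(x)/x$, carried through honestly, yields $\gamma_E+\log\log 2-\li(2)/2$ for the integral, not $\gamma_E+\log\log 2$. These agree only under the normalization $\li(x)=\int_2^x \md t/\log t$, for which $\li(2)=0$ — the convention implicit in the formula from \cite[Theorem 15.2]{MV} that the paper's proof quotes — whereas elsewhere the paper takes $\li(2)=1.045\ldots$ (e.g.\ in $\int_2^\infty(\pi(x)-\li(x))x^{-2}\md x=\mc{E}_2+\log\log 2-\li(2)/2$). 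Saying the constants ``assemble to the stated value'' skips exactly the endpoint bookkeeping you yourself flagged as the delicate part: you must either fix the convention for $\li$ or retain the $-\li(2)/2$. (The discrepancy is immaterial to the lemma's role in the Landau-oscillation argument, where only analyticity and the location of singularities matter, but the stated constant is convention-dependent and your proof as written does not close this.) A minor further point: absolute convergence does not follow from $\psi(x)-x=o(x)$ alone, since $o(1/x)$ need not be integrable; this is harmless because your limit computation itself establishes convergence of the improper integrals, but the justification as phrased is not valid.
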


\begin{proof}
    \begin{enumerate}[label=(\alph*)]
        \item It is enough to recall that
        \[
        \int_1^\infty \frac{\psi(x)-x}{x^{s+1}}\md x=-\frac{\zeta'(s)}{s\zeta(s)}-\frac{1}{s-1},\hspace{0.5cm}{\sigma>1}
        \]
        where
        \[\frac{\zeta'(s)}{\zeta(s)}=-\frac{1}{s-1}+\gamma_E+O(|s-1|).
        \]

        \item 
        For $\sigma>1$ we have
        \[
        \int_2^\infty \frac{\Pi(x)}{x^{s+1}}\md x=\frac{\log\zeta(s)}{s}\hspace{0.5cm}\text{and}\hspace{0.5cm} \int_2^\infty \frac{\li(x)}{x^{s+1}}\md x=\frac{-\log(s-1)+r(s)}{s}
        \]
        where 
        \[
        r(s):=-\int_0^{(s-1)\log 2}\frac{e^{-u}-1}{u}\md u-\gamma_E-\log\log 2
        \]
        is entire (see \cite[Theorem 15.2]{MV}). Thus,
        \[
        \int_2^\infty \frac{\Pi(x)-\li(x)}{x^2}\md x=\lim_{s\to 1^+}\left(\frac{\log(\zeta(s)(s-1))}{s}-\frac{r(s)}{s}\right)=\gamma_E+\log\log 2.
        \]
    \end{enumerate}
\end{proof}

Supposing that $\Theta>1/2$, we show for $i\in\{1,2\}$ that
\begin{equation}\label{Omega bound for E_1 E_2}
    \int_2^X E_i(x)\md x=\Omega_{\pm}(X^{\Theta-\epsilon})
\end{equation}
where the notation $f=\Omega_{\pm}(g)$ means $\limsup_{x\to \infty}\frac{f(x)}{|g(x)|}>0$ and $\liminf_{x\to \infty}\frac{f(x)}{|g(x)|}<0$, as usual. In view of \eqref{error psi-theta}, \eqref{error Pi-pi}, \eqref{mean of e_1} and \eqref{mean of e_2}, it suffices to establish
\begin{equation}\label{suff condition on psi(x)-x}
    \int_X^\infty \frac{\psi(x)-x}{x^2}\md x=\Omega_{\pm}(X^{\Theta-1-\epsilon})
\end{equation}
and
\begin{equation}\label{suff condition on Pi(x)-x}
    \int_X^\infty \frac{\Pi(x)-\li(x)}{x^2}\md x=\Omega_{\pm}(X^{\Theta-1-\epsilon}).
\end{equation}
First let
\begin{align*}
    A_1(X):=X^{\Theta-1-\epsilon}+\int_X^\infty \frac{\psi(x)-x}{x^2}\md x.
\end{align*}
For $\sigma>1$, using integration by parts and Lemma~\ref{lemma necessity for thm1} we obtain
\begin{align*}
    F(s):=&\int_1^\infty A_1(x) x^{-s}\md x\\
    =&\frac{1}{s-\Theta+\epsilon}+\frac{1}{s-1}\int_1^\infty \frac{\psi(x)-x}{x^2}\md x+\frac{1}{s-1}\int_1^\infty  \frac{-\psi(x)+x}{x^{s+1}}\md x\\
    =&\frac{1}{s-\Theta+\epsilon}+\frac{1}{s-1}\left(-1-\gamma_E+\frac{\zeta'(s)}{s\zeta(s)}+\frac{1}{s-1}\right).
\end{align*}
Observe that this function is analytic at all real values $s>\Theta-\epsilon$. In particular, it is analytic at $s=1$ as
\[
F(s)=\frac{1}{s-\Theta+\epsilon}+\frac{1}{s-1}\left(-\frac{(s-1)(1+\gamma_E)}{s}+O\left(\bigg|\frac{s-1}{s}\bigg|\right)\right)
\]
as $s\to 1$. If $A_1(X)>0$ for all sufficiently large $X$, then a standard application of Landau's oscillation theorem (again see the proof of \cite[Theorem 15.2]{MV}) leads to a contradiction. This proves the $\Omega_{-}$ part of \eqref{suff condition on psi(x)-x}, and the $\Omega_{+}$ part can be deduced in a similar way.

To prove \eqref{suff condition on Pi(x)-x}, consider $\int_2^\infty A_2(x) x^{-s}\md x$ where
\begin{align*}
    A_2(X):=X^{\Theta-1-\epsilon}+\int_X^\infty \frac{\Pi(x)-\li(x)}{x^2}\md x
\end{align*}
and then proceed as before. The proof of Theorem~\ref{thm E_1 E_2} is now complete.

\section{The mean value of \texorpdfstring{$E_3$}{}: Proof of Theorem~\ref{thm E_3}, \texorpdfstring{$1/2\leq \Theta<1$}{}} \label{section: proof of thm E_3 Theta<1}

The key to studying the mean of $E_3$ turns out to be the mean square of $E_2$, which will be addressed in Lemma~\ref{lemma mean square E_2} below. We remark that the full strength of this result will not necessarily be needed; in fact, for our purposes it suffices to trivially bound $\int_2^X (E_2(x))^2\md x$ by inserting classical bounds such as
\begin{equation}\label{error PNT Theta}
    \psi(x)-x=O(x^\Theta(\log x)^2)\hspace{0.3cm}\text{and} \hspace{0.3cm} \pi(x)-\li(x)=O(x^\Theta\log x)
\end{equation}
into the integral. However, we choose to include the lemma for completeness. Regarding the estimates \eqref{error PNT Theta}, in fact Grosswald \cite{Gro} showed that a factor of $(\log x)^2$ can be saved if $\Theta>1/2$. That is,
\begin{equation}\label{Grosswald}
    \psi(x)-x=O_\Theta(x^\Theta)\hspace{0.3cm}\text{and} \hspace{0.3cm} \pi(x)-\li(x)=O_\Theta(x^\Theta/\log x),
\end{equation}
with the implied constants now depending on $\Theta$. The proof exploits Ingham's zero-density estimate \cite{Ing}, which shows that the contribution to the explicit formula from zeros with real part close to $\Theta$ is $O(1)$.

Before proving Lemma~\ref{lemma mean square E_2}, we first recall a mean square estimate of $\psi(x)-x$. 

\begin{lemma}\label{lemma mean square psi(x)-x}
If $\Theta=1/2$, then 
\[
\int_X^{2X} (\psi(x)-x)^2 \md x\asymp X^2.
\]

If $\Theta>1/2$, then
\[
X^{2\Theta+1-\epsilon}\ll \int_X^{2X} (\psi(x)-x)^2 \md x\ll X^{2\Theta+1},
\]
where the upper bound is effective and uniform in $\Theta$, while the lower bound is ineffective.
\end{lemma}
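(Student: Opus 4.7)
The plan is to invoke the truncated explicit formula
$$\psi(x)-x = -\sum_{|\gamma|\leq T}\frac{x^\rho}{\rho} + O\!\left(\frac{x(\log xT)^2}{T}+\log x\right),$$
choose $T$ as a suitable power of $X$ so that the error contributes negligibly to the mean square, and then compute the $L^2$ norm of the zero sum directly. Expanding the square and integrating term-by-term,
$$\int_X^{2X}\Bigl|\sum_{|\gamma|\leq T}\frac{x^\rho}{\rho}\Bigr|^2 \md x = \sum_{\rho_1,\rho_2}\frac{1}{\rho_1\bar\rho_2}\cdot\frac{(2X)^{\rho_1+\bar\rho_2+1}-X^{\rho_1+\bar\rho_2+1}}{\rho_1+\bar\rho_2+1},$$
with the diagonal $\rho_1=\rho_2$ furnishing the main term and the off-diagonal bounded via the $\gamma_1-\gamma_2$ factor appearing in the denominator. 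This is the classical Cram\'er approach.

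For the upper bound, the diagonal contribution is
$$\sum_{|\gamma|\leq T}\frac{(2^{2\beta+1}-1)X^{2\beta+1}}{(2\beta+1)|\rho|^2} \ll X^{2\Theta+1}\sum_\rho|\rho|^{-2},$$
which is $\ll X^2$ under RH and gives the uniform bound $\ll X^{2\Theta+1}$ in general, using that $\sum_\rho|\rho|^{-2}$ is a universal (classical) constant. Alternatively, when $\Theta>1/2$ one can simply apply Grosswald's pointwise bound \eqref{Grosswald} and integrate the square over $[X,2X]$. The off-diagonal part is controlled by a Montgomery--Vaughan-type mean value estimate for Dirichlet polynomials, in conjunction with the zero-counting bound $N(T)\ll T\log T$, showing that the cross-terms are of strictly smaller order than the diagonal.

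For the lower bounds, under RH the diagonal is bounded below by a positive constant multiple of $X^2$ (with constant essentially $\tfrac{3}{2}B_1$ coming from \eqref{B_1}), giving $\int_X^{2X}(\psi(x)-x)^2\md x \gg X^2$. When $\Theta>1/2$, one selects a single zero $\rho_0=\beta_0+i\gamma_0$ with $\beta_0>\Theta-\epsilon/2$ (guaranteed by the very definition of $\Theta$); its diagonal contribution alone is $\gg_{\rho_0}X^{2\beta_0+1}\gg X^{2\Theta+1-\epsilon}$, and the ineffectivity of the resulting lower bound merely reflects our ignorance of the precise location of $\rho_0$. The principal technical obstacle throughout is preventing the off-diagonal cross-terms from swamping the diagonal, which is most delicate for the lower bound in the $\Theta>1/2$ regime where only a single zero underpins the main contribution; handling this requires the Montgomery--Vaughan mean value theorem and careful bookkeeping of the sum $\sum_{\rho_1\neq\rho_2}\frac{X^{\beta_1+\beta_2+1}}{|\rho_1||\rho_2|(|\gamma_1-\gamma_2|+1)}$.
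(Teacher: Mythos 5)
Your upper bounds are fine and follow essentially the paper's route (expand the square of the truncated explicit formula, bound the diagonal by $X^{2\Theta+1}\sum_\rho|\rho|^{-2}$ and the off-diagonal by the convergent double sum \eqref{double sum over zeros}). The genuine gap is in both lower bounds. Over a dyadic interval the diagonal and the off-diagonal contributions are of the \emph{same} order in $X$: the diagonal is $\asymp X^{2}\sum_\gamma(1/4+\gamma^2)^{-1}$ (under RH), while the off-diagonal is $O\bigl(X^{2}\sum_{\gamma_1\neq\gamma_2}|\gamma_1\gamma_2|^{-1}(1+|\gamma_1-\gamma_2|)^{-1}\bigr)$, and there is no reason the second constant is smaller than the first --- a priori it could be much larger, so ``diagonal minus off-diagonal'' proves nothing. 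The Montgomery--Vaughan mean value theorem cannot rescue this: after the substitution $x=Xe^{v}$ the integration range has length $\log 2$, whereas the MV error term contributes $O(\delta_\rho^{-1})$ per frequency with $\delta_\rho^{-1}\gtrsim\log\gamma_\rho$ on average (the ordinates $\gamma$ are \emph{not} well separated), so the error term swamps the would-be main term. Diagonal domination of this kind only works for long logarithmic averages such as Cram\'er's $\int_1^X\bigl((\psi(t)-t)/t\bigr)^2\,\mathrm{d}t/t\sim C\log X$, not on $[X,2X]$. The same objection applies with more force to your single-zero argument when $\Theta>1/2$: the cross terms between $\rho_0$ and the (possibly densely clustered) zeros near the line $\sigma=\Theta$ can cancel the lone diagonal term, and controlling them is precisely the hard content of the Pintz/Stechkin--Popov machinery; the ineffectivity is not merely ignorance of where $\rho_0$ sits but dependence on the whole hypothetical zero configuration near $\sigma=\Theta$.

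The paper sidesteps all of this by quoting $L^1$ lower bounds: Stechkin and Popov give $\int_X^{2X}|\psi(x)-x|\,\mathrm{d}x\gg X^{3/2}$ when $\Theta=1/2$ and $\gg X^{1+\Theta-\epsilon}$ (ineffectively) when $\Theta>1/2$, and Cauchy--Schwarz in the form
\[
\int_X^{2X}(\psi(x)-x)^2\,\mathrm{d}x\;\geq\;\frac{1}{X}\left(\int_X^{2X}|\psi(x)-x|\,\mathrm{d}x\right)^{2}
\]
then yields the stated lower bounds. If you want to keep a self-contained argument you must either import such an $L^1$ (or $\Omega$-type) result or supply a genuinely different mechanism for the lower bound; the diagonal computation alone does not close the proof.
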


\begin{proof}
    \textit{Case 1}: $\Theta=1/2$. For the upper bound see the proof of \cite[Theorem 13.5]{MV}. The idea is to expand the square of the explicit formula and then show that
    \begin{equation}\label{double sum over zeros}
        \sum_{\gamma_i,\gamma_j}\frac{1}{|\gamma_i\gamma_j|(1+|\gamma_i-\gamma_j|)}<\infty
    \end{equation}
    by appropriately splitting the sum over $\gamma_j$ for each fixed $\gamma_i$. In the other direction, Stechkin and Popov \cite[Theorem 6]{PS} proved that
    \[
    \int_X^{2X}|\psi(x)-x|\md x\gg X^{3/2},
    \]
    which gives the desired lower bound via the Cauchy\textendash Schwarz inequality. \footnote {We refer the reader to \cite{BPT} for sharpened estimates.}

    \textit{Case 2}: $\Theta>1/2$. The upper bound follows from a trivial modification of the proof of \cite[Theorem 13.5]{MV}. (Note that if we integrate using the pointwise bound \eqref{Grosswald}, the implied constant would depend on $\Theta$.) The lower bound is a consequence of \cite[Theorem 7]{PS}:
    \[
    \int_X^{2X}|\psi(x)-x|\md x\gg X^{1+\Theta-\epsilon}
    \]
    where the implied constant is ineffective in that it depends on the hypothetical zero distribution of $\zeta(s)$ near $\sigma=\Theta$.
\end{proof}

\begin{lemma}\label{lemma mean square E_2}
\[
\int_{2}^{X} (E_2(x))^2\md x=
\begin{cases}
    O(1) &\text{if $\Theta=1/2$},\\
    O_\Theta\left(\frac{X^{2\Theta-1}}{(\log X)^2}\right) &\text{if $\Theta>1/2$}.
\end{cases}
\]
\end{lemma}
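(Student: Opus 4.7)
The plan is to express $E_2(x)$ as a sum of three manageable pieces involving $\psi(x) - x$, then apply $(a+b+c)^2 \leq 3(a^2+b^2+c^2)$ to reduce the matter to bounding three integrals, each of which succumbs to Lemma~\ref{lemma mean square psi(x)-x} via dyadic decomposition. Starting from $\sum_{p\leq x}\tfrac{1}{p} = \sum_{n\leq x}\tfrac{\Lambda(n)}{n\log n} - S(x)$, where $S(x):=\sum_{p^k\leq x,\,k\geq 2}\tfrac{1}{kp^k}$, and applying Stieltjes integration by parts to $\int_{2^-}^x \tfrac{d\psi(u)}{u\log u}$, I would arrive at
\[
E_2(x) = \frac{\psi(x) - x}{x\log x} - \int_x^\infty (\psi(u) - u)\frac{1+\log u}{u^2(\log u)^2}\md u + (S_\infty - S(x)),
\]
where $S_\infty = \lim_{x\to\infty}S(x)$, and the additive constants are pinned down by the condition $E_2(x)\to 0$. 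Convergence of the tail integral follows from $\psi(u) - u = O(u^{\Theta + \epsilon})$ with $\Theta < 1$.

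The easiest piece is $(S_\infty - S(x))^2 \ll 1/(x(\log x)^2)$, contributing $O(1)$ upon integration. For the pointwise term, I would dyadically decompose $[2,X]$ into intervals $[2^k, 2^{k+1}]$ and invoke Lemma~\ref{lemma mean square psi(x)-x}:
\[
\int_{2^k}^{2^{k+1}} \frac{(\psi(x) - x)^2}{x^2(\log x)^2}\md x \ll \frac{1}{(2^k)^2 k^2} \int_{2^k}^{2^{k+1}}(\psi(x) - x)^2 \md x \ll \frac{(2^k)^{2\Theta - 1}}{k^2},
\]
which sums over $2^k \leq X$ to $O(1)$ when $\Theta = 1/2$ (via $\sum k^{-2}$) and to $O(X^{2\Theta - 1}/(\log X)^2)$ when $\Theta > 1/2$ (geometric series dominated by $k \approx \log_2 X$).

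The main obstacle is the middle term. I would bound the tail integral itself by splitting dyadically and applying Cauchy--Schwarz on each piece:
\[
\int_{2^k}^{2^{k+1}}\frac{|\psi(u)-u|}{u^2\log u}\md u \leq \Bigl(\int_{2^k}^{2^{k+1}} (\psi(u)-u)^2\md u\Bigr)^{1/2} \Bigl(\int_{2^k}^{2^{k+1}}\frac{\md u}{u^4(\log u)^2}\Bigr)^{1/2} \ll \frac{(2^k)^{\Theta-1}}{k}.
\]
Summing over $k$ with $2^k \geq x$ yields $\int_x^\infty \frac{|\psi(u)-u|}{u^2\log u}\md u \ll x^{\Theta-1}/\log x$; squaring and integrating over $[2,X]$ then reduces the task to
\[
\int_2^X \frac{x^{2\Theta - 2}}{(\log x)^2}\md x,
\]
which is $O(1)$ for $\Theta = 1/2$ and $O(X^{2\Theta - 1}/(\log X)^2)$ for $\Theta > 1/2$. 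The technical subtlety lies in choosing the Cauchy--Schwarz weights so that the geometric series in $k$ converges with precisely the correct $x$-dependence; a naive substitution of the pointwise bound \eqref{error PNT Theta} would fall short of $O(1)$ in the RH case.
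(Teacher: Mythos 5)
Your decomposition and the paper's are essentially the same in substance: both reduce the mean square of $E_2$ to the mean square of $\psi(x)-x$ (Lemma~\ref{lemma mean square psi(x)-x}) by splitting $E_2(x)$ into a pointwise term of size $(\psi(x)-x)/(x\log x)$ plus a tail integral, and both sum dyadically. The cosmetic differences are in your favor: you work with $\sum_{n\le x}\Lambda(n)/(n\log n)$ directly, so the prime-power correction $S_\infty-S(x)$ is isolated cleanly and there is no detour through $\pi(x)-\li(x)$; and you control the tail integral by Cauchy--Schwarz on dyadic blocks, where the paper instead substitutes pointwise bounds (the RH bound when $\Theta=1/2$, Grosswald's $O_\Theta(x^\Theta)$ when $\Theta>1/2$) or integrates the truncated explicit formula termwise. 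Your closing observation --- that naively inserting $\psi(x)-x=O(\sqrt{x}(\log x)^2)$ loses powers of $\log$ and cannot give $O(1)$ under RH --- is exactly why the mean-square input is needed.

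The one genuine gap is the endpoint $\Theta=1$, which the statement of the lemma does not exclude. Your bound for the tail,
\[
\int_x^\infty \frac{|\psi(u)-u|}{u^2\log u}\md u\ll \sum_{2^k\ge x}\frac{(2^k)^{\Theta-1}}{k},
\]
is a convergent geometric series only for $\Theta<1$; at $\Theta=1$ it degenerates into a harmonic tail and diverges, and likewise your justification of the absolute convergence of $\int_x^\infty(\psi(u)-u)(1+\log u)u^{-2}(\log u)^{-2}\md u$ explicitly assumes $\Theta<1$. This is easily repaired: for the tail one can always fall back on the unconditional estimate $\psi(u)-u\ll u\exp(-c\sqrt{\log u})$, which gives $\int_x^\infty|\psi(u)-u|\,u^{-2}(\log u)^{-1}\md u\ll \exp(-c'\sqrt{\log x})$, more than enough for the claimed $O(X/(\log X)^2)$ at $\Theta=1$. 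But as written your argument does not cover the full range of the statement. (For what it is worth, the paper's first-choice argument for $\Theta>1/2$, substituting Grosswald's pointwise bound into $I_2$, has the same defect at $\Theta=1$; it is the alternative truncated-explicit-formula computation that covers that endpoint, and the paper only ever invokes the lemma for $\Theta<1$.)
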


\begin{proof}
We see from \eqref{sum of 1/p} that
\[
\int_2^X (E_2(x))^2\md x
\ll \int_2^X \frac{(\pi(x)-\li(x))^2}{x^2}\md x+\int_2^X \left(\int_x^\infty \frac{\pi(t)-\li(t)}{t^2}\md t\right)^2\md x.
\]
Denote these two integrals by $I_1$ and $I_2$, respectively.

\textit{Case 1}: $\Theta=1/2$. On combining \eqref{error psi(x)-x}, \eqref{RS psi-theta} and \eqref{pi-li interms of theta-x}, we find that in this case
\[
    \pi(x)-\li(x)=\frac{\psi(x)-x}{\log x}-\frac{\sqrt{x}}{\log x}+O\left(\frac{\sqrt{x}}{(\log x)^2}\right),
\]
so that
\begin{align*}
    I_1=\int_2^X \frac{(\psi(x)-x)^2}{x^2(\log x)^2}\md x+O\left(\int_2^X \frac{(\psi(x)-x)\sqrt{x}}{x^2(\log x)^2}\md x\right)+O\left(\int_2^X \frac{(\sqrt{x})^2}{x^2(\log x)^2}\md x\right).
\end{align*}
The third integral is plainly $O(1)$, and so is the second integral as shown by termwise integration of the explicit formula. For the first integral, we apply Lemma~\ref{lemma mean square E_2} and sum over dyadic intervals. Moreover, $I_2=O(1)$ as a consequence of the estimate 
\begin{equation}
    \int_x^\infty \frac{\pi(t)-\li(t)}{t^2}\md x\ll \frac{1}{\sqrt{x}\log x},
\end{equation}
which can be proved similarly to the argument at the end of $\S \ref{section: proof of thm E_1 E_2 sufficiency}$.

\bigskip
\textit{Case 2: $\Theta>1/2$}. 
Bounding $I_1$ and $I_2$ with \eqref{Grosswald} suffices here. A more elementary way (free of using any zero-density estimate) is to rewrite both integrals using \eqref{pi-li interms of theta-x} and work with the explicit formula again. However, either way the implicit constant depends on $\Theta$ and becomes worse when $\Theta$ approaches $1/2$. For example, for the first integral appearing in the expansion of $I_1$, Lemma~\ref{lemma mean square psi(x)-x} shows that 
\[
\int_X^{2X} \frac{(\psi(x)-x)^2}{x^2(\log x)^2}\md x\ll \frac{X^{2\Theta-1}}{(\log X)^2},
\]
and then summing over dyadic intervals gives a constant of size approximately $1/(2\Theta-1)$. We now present an alternative argument of similar nature, which will also be useful for Lemma~\ref{lemma int E_2^2 by zero-free region}. On recalling the truncated version of the explicit formula (see, e.g., \cite[p. 109]{Dav}), we observe that for any $0<\delta<\Theta-1/2$ this integral is bounded by 
\begin{align*}
    \sum_{|\gamma_i|,|\gamma_j|\leq X^2}&\frac{1}{\rho_i\ov{\rho_j}}\int_2^X \frac{x^{\beta_i+\beta_j-2+i(\gamma_i-\gamma_j)}}{(\log x)^2}\md x+O(1)\\
    =& \sum_{\substack{\beta_j\geq 1/2+\delta\\ |\gamma_i|,|\gamma_j|\leq X^2}}\frac{1}{\rho_i\ov{\rho_j}}\int_2^X \frac{x^{\beta_i+\beta_j-2+i(\gamma_i-\gamma_j)}}{(\log x)^2}\md x\\
    &\hspace{1cm}+O\left(\int_2^X \sum_{|\gamma_i|\leq X^2}\frac{x^{\beta_i}}{|\rho_i|}\sum_{\substack{\beta_j<1/2+\delta\\ |\gamma_j|\leq X^2}} \frac{x^{\beta_j}}{|\rho_j|}\frac{1}{x^2(\log x)^2}\md x\right)\\
    \ll & \frac{X^{2\Theta-1}}{(\log X)^2}\sum_{\gamma_i,\gamma_j}\frac{1}{|\gamma_i\gamma_j(\delta+|\gamma_i-\gamma_j|)|}+O\left(X^{\Theta+\delta-1/2}(\log X)^2\right)\\
    \ll&_\Theta \frac{X^{2\Theta-1}}{(\log X)^2},
\end{align*}
where the double sum in the penultimate line converges in comparison with \eqref{double sum over zeros}. The other integrals can be treated similarly. 
\end{proof}

\begin{remark}
    The proof of \cite[Theorems 7 and 12]{PS} can most likely be modified to show that
    \[
    \int_2^X |E_2(x)|\md x\gg X^{\Theta-\epsilon},
    \]
    which in turn implies that
    \[
    \int_2^X (E_2(x))^2\md x \gg X^{2\Theta-1-\epsilon}.
    \]
    We do not pursue this here. Note that from \eqref{Omega bound for E_1 E_2} one can only assert this for a sequence of $X\to \infty$.   
\end{remark}

We are now ready to prove Theorem~\ref{thm E_3} in the case where $1/2\leq \Theta<1$. Note that
\begin{align*}
    \int_2^X E_3(x)\md x=&\int_2^X \left(\exp\left(-\log\log x-\sum_{p\leq x}\log (1-p^{-1})\right)-e^{\gamma_E}\right)\md x\\
    =&\int_2^X \left(\exp\left(-\log\log x+\sum_{p\leq x}p^{-1}-\mc{E}_2+\gamma_E\right)-e^{\gamma_E}\right)\md x\\
    =&e^{\gamma_E} \int_2^X \left(e^{E_2(x)}-1\right)\md x\\
    =&e^{\gamma_E} \int_2^X E_2(x)\md x+e^{\gamma_E}\sum_{n=2}^\infty \frac{1}{n!}\int_2^X (E_2(x))^n\md x.
\end{align*}
Under RH, the first integral in the last line is positive by Theorem~\ref{thm E_1 E_2}. Moreover,
\begin{equation}\label{bound higher moments of E_2}
    \frac{1}{2}\int_2^X (E_2(x))^{2}\md x> \sum_{n=3}^\infty \frac{1}{n!}\int_2^X |E_2(x)|^n\md x
\end{equation}
because $|E_2(x)|<1$ for all $x>2$ (see \cite[Theorem 5]{RS}), and consequently the sum of remaining integrals is also positive. On the other hand, if RH is false with $1/2<\Theta<1$, then Lemma~\ref{lemma mean square E_2} together with \eqref{bound higher moments of E_2} implies that
\[
\sum_{n=2}^\infty \frac{1}{n!}\int_2^X (E_2(x))^n\md x\ll X^{2\Theta-1}.
\]
On invoking \eqref{Omega bound for E_1 E_2} and choosing $\epsilon<1-\Theta$, we find that 
\[
\int_2^X E_3(x)\md x=\Omega_{\pm} (X^{\Theta-\epsilon}).
\]

However, the previous line of reasoning breaks down if $\Theta=1$ since $\int_2^X (E_2(x))^2\md x$ and the large values of $\int_2^X E_2(x)\md x$ are of comparable magnitude, making it unclear whether there still exists arbitrarily large $X$ where the first moment dominates the second moment. Our next step is to address this issue using tools from the works \cite{Pin1, Pin2} of Pintz.

\section{The mean value of \texorpdfstring{$E_3$}{}: Proof of Theorem~\ref{thm E_3}, \texorpdfstring{$\Theta=1$}{}} \label{section: proof of thm E_3 Theta=1}

For simplicity of notation, put $\Delta_1(x):=\int_2^x E_2(t)\md t$ and $\Delta_2(x):=\int_2^x (E_2(t))^2\md t$, so that 
\[
\Delta_1(x)< e^{-\gamma_E}\int_2^x E_3(t)\md t< \Delta_1(x)+\Delta_2(x)
\]
in view of \eqref{bound higher moments of E_2}. We already know from \eqref{Omega bound for E_1 E_2} that there exists arbitrarily large $x$ with $\Delta_1(x)>0$, so our goal is to show that $\Delta_1(x)<-\Delta_2(x)$ infinitely often. We first need a more precise result on the oscillations of $\Delta_1(x)$.
\begin{lemma}\label{lemma oscill caused by single zero}
    Let $\epsilon>0$. If $\zeta(s)$ has a zero $\rho_0=\beta_0+i\gamma_0$ where $\beta_0>1/2+\epsilon$ and $\gamma_0>C_1(\epsilon)$, then for all $X>\gamma_0^{C_2(\epsilon)}$ there exist $x_1,x_2\in [X,X^{1+\epsilon}]$ such that
    \begin{equation}\label{eqn oscill caused by single zero}
        \Delta_1(x_1)>\frac{x_1^{\beta_0}}{\gamma_0^{2+\epsilon}\log x_1} \hspace{0.3cm}\text{and} \hspace{0.3cm}
        \Delta_1(x_2)<-\frac{x_2^{\beta_0}}{\gamma_0^{2+\epsilon}\log x_2}.
    \end{equation}
    Both $C_1(\epsilon)$ and $C_2(\epsilon)$ are effectively computable.
\end{lemma}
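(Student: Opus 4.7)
My plan is to derive a quasi-explicit formula for $\Delta_1(X)$ and then to isolate the oscillatory contribution of the single zero $\rho_0$ using a Turán-type power-sum argument in the spirit of Pintz \cite{Pin1, Pin2}. Starting from the identity
\[
\Delta_1(X) = -X \int_X^\infty \frac{\pi(x)-\li(x)}{x^2}\md x + O(1)
\]
derived in Section~\ref{section: proof of thm E_1 E_2 sufficiency}, and using $\pi(x)-\li(x) = (\theta(x)-x)/\log x + O(\sqrt{x}/(\log x)^2)$ from \eqref{pi-li interms of theta-x} together with a truncated form of the explicit formula for $\psi(x)$, I would obtain the approximation
\[
\Delta_1(X) = \frac{1}{\log X}\sum_{|\gamma|\le T}\frac{X^\rho}{\rho(1-\rho)} + R(X,T),
\]
with $T$ chosen as a small power of $X$ and the remainder $R(X,T)$ bounded via standard estimates.

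To amplify the contribution of $\rho_0$, I would integrate $\Delta_1(x)$ against a kernel sharply localized at frequency $\gamma_0$, for instance $I(Y) := \int_Y^{Y^{1+\epsilon/2}} \Delta_1(x)\, x^{-\bar{\rho_0}-1}\md x$. After substituting the approximate formula above and changing variables via $u=\log x$, the diagonal term $\rho = \rho_0$ produces a main contribution of absolute size $\gg 1/(\gamma_0^2 \log Y)$. For every off-diagonal zero $\rho \ne \rho_0$, integration by parts in $\int e^{i(\gamma-\gamma_0)u}\md u/u$ yields a savings factor $1/((\log Y)|\gamma - \gamma_0|)$. Summing these bounds with the help of Ingham's zero-density estimate shows that the total off-diagonal contribution is of smaller order than the main term whenever $Y \ge \gamma_0^{C_2(\epsilon)}$. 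This forces the existence of $x_1 \in [Y, Y^{1+\epsilon/2}]$ with $\Delta_1(x_1) \ge x_1^{\beta_0}/(\gamma_0^{2+\epsilon}\log x_1)$; the matching $x_2$ with the opposite sign arises by flipping the phase of the kernel, or equivalently by observing that the main contribution is of the form $c\cos(\gamma_0 u + \varphi)$ and must change sign within an interval of length $\epsilon \log Y \gg 1/\gamma_0$.

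The main obstacle will be the off-diagonal estimate, since hypothetical zeros with $\beta > \beta_0$ (which are not excluded by the hypothesis) contribute terms to $I(Y)$ growing as $Y^{(\beta - \beta_0)(1+\epsilon/2)}$ and must be controlled by Ingham's density theorem; the resulting logarithmic losses from the dyadic summation in $|\gamma - \gamma_0| \in [2^k, 2^{k+1}]$ are precisely absorbed into the factor $\gamma_0^\epsilon$ in the denominator of the lower bound. The hypothesis $X > \gamma_0^{C_2(\epsilon)}$ encodes the requirement that $\log X$ dominate $\log \gamma_0$ by a margin sufficient for the integration-by-parts savings to overcome the polynomial growth from zero-density counting. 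I expect the careful balancing of the kernel length, the density input, and the parameters $\gamma_0$ and $X$ to be the most delicate part of the argument.
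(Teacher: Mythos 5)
Your amplifier strategy --- integrating $\Delta_1$ against $x^{-\overline{\rho_0}-1}$ over $[Y,Y^{1+\epsilon/2}]$ and hoping the off-diagonal zeros are controlled by an integration-by-parts saving $1/((\log Y)|\gamma-\gamma_0|)$ plus Ingham's density theorem --- has a genuine gap, and it is exactly the gap that forces the paper (following Pintz \cite{Pin1}) to use the power-sum method instead of a first-moment argument. Two concrete failures. First, a single zero $\rho=\beta+i\gamma$ with $\beta>\beta_0$ and $\gamma\asymp\gamma_0$ contributes to your $I(Y)$ a term of size about $Y^{(\beta-\beta_0)(1+\epsilon/2)}/(\gamma_0^{2}|\gamma-\gamma_0|\log Y)$, which exceeds the diagonal term by a factor growing like a positive power of $Y$; a zero-density theorem bounds the \emph{number} of such zeros, not the size of the contribution of one of them, so no density input can restore the comparison for all large $Y$. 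This configuration cannot be excluded: in the application in \S\ref{section: proof of thm E_3 Theta=1} one takes $\beta_0>1-K\eta(\gamma_0)$ with $\Theta=1$, so zeros of strictly larger real part certainly exist at other heights. Second, even among zeros with $\beta\le\beta_0$ there may be $\asymp\log\gamma_0$ zeros with $|\gamma-\gamma_0|\le 1$, for which integration by parts saves essentially nothing; each contributes an amount comparable to the diagonal term, so the step ``off-diagonal $\ll$ diagonal'' already fails there. Finally, producing $x_2$ by ``flipping the phase'' because the main term looks like $c\cos(\gamma_0 u+\varphi)$ presupposes that the single-zero term dominates pointwise on the window, which is precisely what has not been established.

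The paper's sketched proof circumvents both obstacles: one tests $g(x)=\int_x^\infty(\Pi(t)-\mr{lg}(t))t^{-2}\md t$ against the kernel $e^{ks^2+\mu s}$ shifted by $i\gamma_1$, so that after moving the contour the zeros enter through a sum weighted by $e^{k(\rho-i\gamma_1)^2+\mu(\rho-i\gamma_1)}$. The factor $e^{-k(\gamma-\gamma_1)^2}$ gives genuine Gaussian localization in the ordinate (unlike a $1/|\gamma-\gamma_0|$ saving), and Tur\'an's power-sum theorem then produces a lower bound for the localized zero-sum, of the required sign, for some admissible $\mu$ (i.e.\ some $x$ in $[X,X^{1+\epsilon}]$), \emph{regardless} of how the finitely many nearby zeros --- including ones with $\beta>\beta_0$ --- are configured. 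That is what makes the conclusion hold in every window and in both signs. To complete a proof you should adapt that argument; the only new ingredients relative to \cite{Pin1} are the identity $\int_1^\infty g(x)\frac{\mr{d}}{\mr{d}x}\left(sx^{-s+1}\log x-x^{-s+1}\right)\md x=\zeta'(s)/\zeta(s)+\zeta(s)-1$ and the extra factor of $\gamma_1$ arising from the differentiation, which is what turns Pintz's exponent $1+\epsilon$ into the exponent $2+\epsilon$ in \eqref{eqn oscill caused by single zero}.
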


We only provide a sketch of proof as it follows the same lines as that of \cite[Theorem 2]{Pin1} where Pintz showed (under the same conditions) that
\[
    \pi(x_1)-\li(x_1)>\frac{x_1^{\beta_0}}{\gamma_0^{1+\epsilon}\log x_1} \hspace{0.3cm}\text{and} \hspace{0.3cm}
    \pi(x_2)-\li(x_2)<-\frac{x_2^{\beta_0}}{\gamma_0^{1+\epsilon}\log x_2}.
\]
Heuristically speaking, in our setting the exponent 2 in the denominators comes from the fact that $\int_x^\infty  (\psi(t)-t)/t^2 \md t\approx \sum_\rho \frac{x^{\rho-1}}{\rho(\rho-1)}\approx \sum_\rho \frac{x^{\rho-1}}{\gamma^2}$, which should oscillate up to the same magnitude (modulo a factor of $\log x$) as $f(x):=\int_x^\infty (\pi(t)-\li(t))/t^2\md t$, which is connected to $\Delta_1(x)$ via \eqref{mean of e_2}. 
    
\begin{proof}
    We work with the function
    \[
    g(x):=\int_x^\infty \frac{\Pi(t)-\mr{lg}(t)}{t^2}\md t
    \]
    where 
    \[
    \mr{\lg}(t):=
    \begin{cases}
        0, & 0<x<2,\\
        \displaystyle \sum_{2\leq n\leq x}\frac{1}{\log n}, & x\geq 2.
    \end{cases}
    \]
    It is clear that $\mr{\lg}(t)=\li(t)+O(1)$. In conjunction with \eqref{error Pi-pi}, this gives $f(x)=g(x)+O(x^{-1/2}(\log x)^{-1})$. Further define $H(s):=\zeta'(s)/\zeta(s)+\zeta(s)-1$. It is not hard to show that
    \begin{equation}\label{H(s)}
        \int_1^\infty g(x)\frac{\mr{d}}{\mr{d}x}\left(sx^{-s+1}\log x-x^{-s+1}\right)\md x=H(s),\hspace{0.5cm}\sigma>1.
    \end{equation}
    After suitably choosing another zero $\gamma_1$ close to $\gamma_0$ and parameters $k$, $\mu$, we compute the integral
    \[
    U=\frac{1}{2\pi i}\int_{2-i\infty}^{2+i\infty}H(s+i\gamma_1)e^{ks^2+\mu s}\md x
    \]
    in two ways, one by invoking \eqref{H(s)} and switching the order of integration, and the other by moving the line of integration and applying the residue theorem. The power sum method is then used to yield a contradiction if $g(x)$ fails to get sufficiently large in both signs in the interval. We briefly mention the changes to the first way. In comparison with \cite[(10.3)]{Pin1}, here we have
    \begin{align*}
        U&=\frac{1}{2\pi i}\int_{2-i\infty}^{2+i\infty}\int_1^\infty g(x)\frac{\mr{d}}{\mr{d}x}\left(\left((s+i\gamma_1)x^{-s-i\gamma_1+1}\log x-x^{-s-i\gamma_1+1}\right)e^{ks^2+\mu s}\right)\md x\md s\\
        &=\int_1^\infty g(x)\frac{\mr{d}}{\mr{d}x}\left(x^{-i\gamma_1}x\log x\cdot \frac{1}{2\pi i}\int_{2-i\infty}^{2+i\infty}se^{ks^2+(\mu-\log x)s}\md s\right)\md x\\
        &\hspace{1cm}+i\int_1^\infty g(x)\frac{\mr{d}}{\mr{d}x}\left(\gamma_1 x^{-i\gamma_1}x\log x\cdot \frac{1}{2\pi i}\int_{2-i\infty}^{2+i\infty}e^{ks^2+(\mu-\log x)s}\md s\right)\md x\\
        &\hspace{2cm}-\int_1^\infty g(x)\frac{\mr{d}}{\mr{d}x}\left(x^{-i\gamma_1}x\cdot \frac{1}{2\pi i}\int_{2-i\infty}^{2+i\infty}e^{ks^2+(\mu-\log x)s}\md s\right)\md x\\
        &=: I_1+I_2+I_3.
    \end{align*}
    The following identity related to \cite[(10.2)]{Pin1} is useful in evaluating $I_1$:
    \begin{align*}
        &\frac{1}{2\pi i}\int_{2-i\infty}^{2+i\infty}se^{As^2+Bs}\md s=-\frac{1}{2\pi i}\frac{B}{2A}\int_{2-i\infty}^{2+i\infty}e^{As^2+Bs}\md s=-\exp\left(-\frac{B^2}{4A}\right)\frac{B}{4\sqrt{\pi}A^{3/2}}
    \end{align*}
    for any $A>0$, $B\in \bb{C}$. The term $\gamma_1^2$ appears when one performs differentiation inside $I_2$, and is ultimately responsible for the exponent $2$ in \eqref{eqn oscill caused by single zero}. 
\end{proof}

On the other hand, $\Delta_2(x)$ is controlled by the zero-free region of $\zeta(s)$ in a similar manner to the error term in the prime number theorem (cf. \cite{Pin2}).
\begin{lemma}\label{lemma int E_2^2 by zero-free region}
    Suppose that $\zeta(s)$ has no zero (or finitely many zeros if $\Theta=1$) in the region $\sigma>1-\eta(|t|)$ where $\eta(t):[0,\infty)\to (0,1/2]$ is a continuous decreasing function. Fix $0<\epsilon<1$ and define $\omega(x):=\min_{t\geq 1}(\eta(t)\log x+\log t)$. Then 
    \[
    \Delta_2(x)=O_\epsilon\left(\frac{x}{e^{2(1-\epsilon)\omega(x)}(\log x)^2}\right).
    \]
\end{lemma}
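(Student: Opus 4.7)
The strategy is to reduce the mean-square $\Delta_2(x)$ to a bound on $J(x) := \int_2^x (\psi(t)-t)^2/(t^2(\log t)^2)\,dt$, then combine a Pintz-type pointwise estimate on $\psi(t)-t$ with a dyadic decomposition.

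First, as in the proof of Lemma~\ref{lemma mean square E_2}, using \eqref{sum of 1/p} together with \eqref{pi-li interms of theta-x} and \eqref{error psi-theta}, we obtain the decomposition
\[
E_2(x) = \frac{\psi(x)-x}{x\log x} + R(x), \qquad R(x) = O\!\left(\frac{1}{\sqrt{x}\log x}\right),
\]
where $R$ collects the contributions from $\psi-\theta$, the tail $\int_x^\infty (\pi(t)-\mr{li}(t))/t^2\,dt$, and the integral $\int_2^x (\theta(u)-u)/(u(\log u)^2)\,du$. Squaring, applying $2|ab|\leq a^2+b^2$ on the cross term, and using $\int_2^\infty R(t)^2\,dt = O(1)$, the lemma reduces to showing $J(x) \ll_\epsilon x\,e^{-2(1-\epsilon)\omega(x)}/(\log x)^2$.

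Next, we invoke Pintz's sharp pointwise estimate from \cite{Pin2}: under the zero-free region hypothesis,
\[
|\psi(t)-t| \ll_\epsilon t\,e^{-(1-\epsilon)\omega(t)}
\]
(the finitely many exceptional zeros present when $\Theta=1$ contribute $O(t^{\beta_j})$ terms that, since each $\beta_j<1$ and $\omega(t)=o(\log t)$, are absorbed by the implied constant for $t$ sufficiently large). Squaring and integrating over $[2^k,2^{k+1}]$, the dyadic piece of $J(x)$ is bounded by $T_k := 2^k\,e^{-2(1-\epsilon)\omega(2^k)}/(k\log 2)^2$. The key observation is $\omega(2y)-\omega(y)\leq \tfrac{\log 2}{2}$: indeed, if $t^*$ minimizes $\eta(t)\log y+\log t$, then $\omega(2y)\leq \eta(t^*)\log(2y)+\log t^* = \omega(y)+\eta(t^*)\log 2\leq \omega(y)+\tfrac{\log 2}{2}$ since $\eta\leq 1/2$. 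Therefore
\[
\frac{T_{k+1}}{T_k}\;\geq\; 2\cdot e^{-(1-\epsilon)\log 2}\cdot\left(\frac{k}{k+1}\right)^2\;=\;2^{\epsilon}\bigl(1+O(1/k)\bigr),
\]
so the $T_k$ grow geometrically once $k\geq k_0(\epsilon)$, and the dyadic sum is dominated by its last term $T_{\lfloor\log_2(x/2)\rfloor}\asymp x\,e^{-2(1-\epsilon)\omega(x)}/(\log x)^2$, yielding the claim.

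The \textbf{main obstacle} is the sharp pointwise bound on $\psi(t)-t$ used in the second step. Direct application of the truncated explicit formula, even after a dyadic decomposition of the zero sum and an optimal choice of truncation height, recovers only the weaker exponent $e^{-\omega(t)/2}$. Restoring the optimal factor $1-\epsilon$ requires the smoothed explicit formula technique of Pintz \cite{Pin2} — for instance, multiplying the explicit formula by a Gaussian kernel $e^{ks^2+\mu s}$ (as already illustrated in the proof of Lemma~\ref{lemma oscill caused by single zero}) — which cuts down the contribution from zeros lying near the boundary of the zero-free region to essentially its true size.
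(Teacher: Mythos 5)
Your route is genuinely different from the paper's: the paper never uses a pointwise bound on $\psi(t)-t$, but instead bounds $\Delta_2(x)$ directly by the double sum $\sum_{i,j}\frac{x^{\beta_i+\beta_j-1}}{|\gamma_i\gamma_j|(\delta+|\gamma_i-\gamma_j|)(\log x)^2}$ coming from the truncated explicit formula (as in Lemma~\ref{lemma mean square E_2}), factors it as $\frac{1}{x(\log x)^2}\bigl(\sum_{|\gamma|\le x^2}x^\beta/|\gamma|\bigr)^2$, and quotes the estimate $\sum_{|\gamma|\le x^2}x^\beta/|\gamma|=O_\epsilon(xe^{-(1-\epsilon)\omega(x)})$ from the \emph{proof} of \cite[Theorem 1]{Pin2}. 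So the paper only needs Pintz's zero-sum estimate, whereas you need his full pointwise theorem; in exchange your dyadic step, and in particular the observation $\omega(2y)-\omega(y)\le\frac{\log 2}{2}$ (which uses $\eta\le 1/2$ and makes the ratio $T_{k+1}/T_k\ge 2^\epsilon(1+O(1/k))$), is a clean and correct way to convert a pointwise bound into the stated mean-square bound, and your treatment of the finitely many exceptional zeros via $\omega(x)=o(\log x)$ when $\Theta=1$ is also fine.

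There is, however, a genuine error in your first step: the claim $R(x)=O\bigl(\frac{1}{\sqrt{x}\log x}\bigr)$ is false in the setting of this lemma, where RH is not assumed. Only the $\psi-\theta$ contribution is $O(1/(\sqrt{x}\log x))$. The other two pieces you put into $R$, namely $-\int_x^\infty\frac{\pi(t)-\li(t)}{t^2}\md t$ and $\frac{1}{x}\int_2^x\frac{\theta(u)-u}{u(\log u)^2}\md u$, are controlled only by the zero distribution: generically they are of size $e^{-(1-\epsilon)\omega(x)}/\log x$, i.e.\ of exactly the same order as your main term $\frac{\psi(x)-x}{x\log x}$, and when $\Theta$ is close to or equal to $1$ this is far larger than $1/(\sqrt{x}\log x)$. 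Consequently $\int_2^\infty R(t)^2\md t=O(1)$ fails and your reduction to $J(x)$ does not go through as written. The gap is fixable: deduce from the pointwise bound on $\psi(t)-t$ (by partial summation) that $|\pi(t)-\li(t)|\ll_\epsilon te^{-(1-\epsilon)\omega(t)}/\log t$ and that both of the above integrals are $\ll_\epsilon e^{-(1-\epsilon)\omega(x)}/\log x$, and then run your same dyadic argument on their squares; each contributes $O_\epsilon\bigl(xe^{-2(1-\epsilon)\omega(x)}/(\log x)^2\bigr)$, which is admissible. But these terms must be carried through the zero-free-region machinery rather than discarded as square-integrable remainders.
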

\begin{proof}
    We may assume that $\Theta=1$ as the result follows readily from Lemma~\ref{lemma mean square E_2} if $\Theta<1$. From the last argument in the proof of Lemma~\ref{lemma mean square E_2}, we see that for any fixed $0<\delta<1/2$,
    \begin{align*}
        \Delta_2(x)\ll \sum_{|\gamma_i|,|\gamma_j|\leq x^2}\frac{1}{|\gamma_i\gamma_j(\delta+|\gamma_i-\gamma_j|)|} \frac{x^{\beta_i+\beta_j-1}}{(\log x)^2}
        \ll \frac{1}{x(\log x)^2}\left(\sum_{|\gamma|\leq x^2}\frac{x^{\beta}}{|\gamma|}\right)^2.
    \end{align*}
    The sum in the parenthesis is at most $O_\epsilon(x/e^{(1-\epsilon)\omega(x)})$ according to the proof of \cite[Theorem 1]{Pin2}. This proves the lemma.
\end{proof}

We now prove Theorem~\ref{thm E_3} for the case $\Theta=1$ under the following assumption:

\begin{assumption}\label{assumption}
    There exists a differentiable function $\eta(t):[0,\infty)\to (0,1/2]$ with the following properties:
    \begin{enumerate}[label=(\roman*)]
        \item $\zeta(s)$ has finitely many zeros in the region $\sigma>1-\eta(|t|)$.

        \item $\zeta(s)$ has infinitely many zeros in the region $\sigma>1-K \eta(|t|)$ for some constant $K>1$.

        \item $\eta'(t)t$ strictly increases to 0 as $t\to \infty$.

        \item $\eta(t)/|\eta'(t)|\gg t\log t$.
    \end{enumerate}
\end{assumption}
    
Without loss of generality we may suppose that $1<K<2$, since otherwise we can simply multiply $\eta(t)$ by a suitable constant. Let $\epsilon=\epsilon(K)$ be a small positive constant to be determined later. Take a zero $\rho_0=\beta_0+i\gamma_0$ with $\gamma_0>C_1(\epsilon)$ (as defined in Lemma~\ref{lemma oscill caused by single zero}) sufficiently large and $\beta_0>1-K\eta(\gamma_0)>1/2+\epsilon$. Such $\rho_0$ exists and there are infinitely many of them due to assumption (ii). 

For any fixed $x>1$ the minimum of $\eta(t)\log x+\log t$ occurs at some $t=t_0$ so that $\log x=-1/(\eta'(t_0)t_0)$. It follows from assumption (iii) that such $t_0$ is unique if exists. Thus if we choose $X_0$ such that $\log X_0=1/(|\eta'(\gamma_0)|\gamma_0)$, then 
\[
\omega(X_0)=\frac{\eta(\gamma_0)}{|\eta'(\gamma_0)| \gamma_0}+\log \gamma_0.
\]
By assumption (iv) $X_0\geq  \gamma_0^{c/\eta(\gamma_0)}>\gamma_0^{C_2(\epsilon)}$, so Lemma~\ref{lemma oscill caused by single zero} implies that there exists $x_2\in [X_0,X_0^{1+\epsilon}]$ with
\[
\Delta_1(x_2)<-\frac{x_2^{1-K\eta(\gamma_0)}}{\gamma_0^{2+\epsilon}\log x_2}.
\]
Taking logarithms we find that
\begin{align*}
    \log (-\Delta_1(x_2))>&\log x_2-K\eta(\gamma_0)(1+\epsilon)\log X_0 -(2+\epsilon)\log \gamma_0-\log\log x_2\\
    =&\log x_2-K(1+\epsilon)\frac{\eta(\gamma_0)}{|\eta'(\gamma_0)|\gamma_0}-(2+\epsilon)\log \gamma_0-\log\log x_2.
\end{align*}
On the other hand, as a consequence of Lemma~\ref{lemma int E_2^2 by zero-free region} and assumption (i), 
\begin{align*}
    \log \Delta_2(x_2)\leq &\log x_2-2(1-\epsilon)\omega(x_2)+O_\epsilon(1)-2\log\log x_2\\
    \leq& \log x_2-2(1-\epsilon)\omega(X_0)-\log\log x_2\\
    =& \log x_2-2(1-\epsilon)\left(\frac{\eta(\gamma_0)}{|\eta'(\gamma_0)|\gamma_0}+\log \gamma_0\right)-\log\log x_2,
\end{align*}
where we used the fact that $\omega(x)$ is an increasing function. Therefore, we would have $-\Delta_1(x_2)>\Delta_2(x_2)$ if
\[
(2(1-\epsilon)-K(1+\epsilon))\frac{\eta(\gamma_0)}{|\eta'(\gamma_0)|\gamma_0}>3\epsilon \log\gamma_0.
\]
Since $K<2$, assumption (iv) guarantees that we can choose $\epsilon$ (depending only on $K$, not $\gamma_0$) small enough such that this inequality is verified.

Finally, the infinitude of such zeros $\rho_0$ yields arbitrarily large $x_2$ with $-\Delta_1(x_2)>\Delta_2(x_2)$, thereby concluding the proof.

\section{Real primitive characters: Proof of Theorem~\ref{thm real prim char}}\label{section: real prim char}

We deduce by partial summation that 
\begin{align*}
    E_1(x;d)=&-\sum_{p>x}\frac{\chi_d(p)\log p}{p}=\frac{\theta(x,\chi_d)}{x}-\int_x^\infty \frac{\theta(y,\chi_d)}{y^2}\md y,\\
    E_2(x;d)=&-\sum_{p>x}\frac{\chi_d(p)}{p}=\frac{\pi(x,\chi_d)}{x}-\int_x^\infty \frac{\pi(y,\chi_d)}{y^2}\md y, 
\end{align*}
from which it follows that
\begin{align*}
    \int_2^X E_1(x;d)\md x=&XE_1(X;d)-\theta(X,\chi_d)+2\mc{E}_{1}(d)=-X\int_X^\infty \frac{\theta(x,\chi_d)}{x^2}\md x+2\mc{E}_{1}(d),\\
    \int_2^X E_2(x;d)\md x=&XE_2(X;d)-\pi(X,\chi_d)+2\mc{E}_{2}(d)=-X\int_X^\infty \frac{\pi(x,\chi_d)}{x^2}\md x+2\mc{E}_{2}(d).
\end{align*}

We only focus on $E_1$ since the case for $E_2$ can be treated similarly. From \eqref{explicit formula for psi(x,chi)} and \eqref{error psi(x,chi)-theta(x,chi)} we have
\begin{equation}\label{explicit formula of int E_1(x;d)}
    \int_2^X E_1(x;d)\md x=-\sum_{\rho} \frac{X^{\rho}}{\rho(\rho-1)}+2\sqrt{X}+o(\sqrt{X}).
\end{equation}
Assuming GRH for $L(s,\chi_d)$, $\int_2^X E_1(x;d)\md x>0$ for all sufficiently large $X$ if 
\begin{equation}\label{sum over gamma_d <2}
    B_{\chi_d}:=\sum_{\gamma}\frac{1}{1/4+\gamma^2}<2. 
\end{equation}
In general $\Re \frac{\xi'}{\xi}(0,\chi)=-\sum_{\rho} \Re(1/\rho)=-B_\chi/2$ (see \cite[p. 81]{Dav}) for an arbitrary primitive character $\chi$, so here we have $B_{\chi_d}=-2\frac{\xi'}{\xi}(0,\chi_d)$ because $\chi_d$ is real. The relation between $B_{\chi_d}$ and the logarithmic derivative of $L(s,\chi_d)$ is given by (see \cite[(10.39)]{MV})
\begin{equation}\label{B in terms of L'/L}
    B_{\chi_d}:=\log \frac{|d|}{\pi}+2\frac{L'}{L}(1,\chi_d)-\gamma_E-2(1-\kappa)\log 2,
\end{equation}
where $\kappa=0$ or 1 according as $\chi_d(-1)=1$ (i.e., $d>0$) or $\chi_d(-1)=-1$ (i.e., $d<0$).

Observe that if $B_{\chi_d}>2$ when we assume both GRH and LI for $L(s,\chi_d)$, then the left-hand side of \eqref{explicit formula of int E_1(x;d)} is $\Omega_{\pm}(\sqrt{X})$ in view of Kronecker's theorem. Hence, conditional on LI, a fundamental discriminant $d$ satisfies the equivalence in Theorem~\ref{thm E_3} if and only if it satisfies \eqref{sum over gamma_d <2} (barring the extremely unlikely case where $B_{\chi_d}$ equals exactly 2, but we shall rule out this possibility by computation). Denote by $\mc{D}$ the set of all such $d$'s, which is finite since the number of non-trivial zeros of $L(s,\chi_d)$ up to a fixed height $T$ becomes arbitrarily large as $d\to \infty$ (at least for $T\geq 5/7$; see Lemma~\ref{N(T,chi)} below). To determine this set, we use the following explicit bound on the zero-counting function due to Bennett \textit{et al.}:

\begin{lemma}[{\cite[Theorem 1.1]{BMOR}}]\label{N(T,chi)}
    Let $\chi$ be a character with conductor $q>1$. If $T\geq 5/7$ and $\ell:=\log \frac{q(T+2)}{2\pi}>1.567$, then
    \[
    \bigg|N(T,\chi)-\left(\frac{T}{\pi}\log \frac{qT}{2\pi e}-\frac{\chi(-1)}{4}\right)\bigg|\leq 0.22737\ell+2\log(1+\ell)-0.5,
    \]
where $N(T,\chi)=\#\{\rho: L(\rho,\chi)=0, \: 0<\beta<1, \:|\gamma|\leq T \}$.
\end{lemma}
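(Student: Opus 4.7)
The plan is to prove this explicit zero-counting estimate by the classical argument-principle approach, with the substantive work lying in the explicit tracking of all constants. First, form the completed $L$-function $\xi(s,\chi) = (q/\pi)^{(s+a)/2}\Gamma((s+a)/2)L(s,\chi)$, where $a\in\{0,1\}$ records the parity of $\chi(-1)$, and recall its functional equation $\xi(s,\chi)=W(\chi)\,\xi(1-s,\ov{\chi})$ with $|W(\chi)|=1$. Since $\xi$ is entire, has no zeros outside the critical strip, and shares its critical-strip zeros with $L(s,\chi)$, the argument principle applied to the rectangle $R$ with vertices $-1/2\pm iT$ and $3/2\pm iT$ yields $N(T,\chi) = \frac{1}{2\pi}\Delta_{\partial R}\arg \xi(s,\chi)$.

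Next, I would use the functional equation to fold $\partial R$: the argument change on the left half of $\partial R$ equals that on the right half under $s\mapsto 1-s$, so it suffices to compute $\Delta\arg\xi$ along the L-shaped path from $3/2-iT$ up to $3/2+iT$ and across to $-1/2+iT$, and then double. The main term $\frac{T}{\pi}\log\frac{qT}{2\pi e}-\chi(-1)/4$ emerges from applying Stirling's formula to the Gamma factor and trivially integrating $\arg(q/\pi)^{(s+a)/2}$ along this path, with the $-\chi(-1)/4$ distinguishing the even and odd cases $a\in\{0,1\}$.

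The main obstacle, and the source of the constants $0.22737$, $2$, and $-0.5$, is bounding the error term $\Delta\arg L(s,\chi)$ along the horizontal segment from $3/2+iT$ to $-1/2+iT$. I would apply Backlund's trick: the change of $\arg L$ along this segment is at most $\pi(1+n)$, where $n$ counts the sign changes of $\Re L(\sigma+iT,\chi)$ for $-1/2\leq\sigma\leq 3/2$, and in turn $n$ is bounded by the number of zeros in a disk centered at $2+iT$ of the real-on-the-axis function $\tfrac{1}{2}\bigl(L(s,\chi)+\ov{L(\ov{s},\ov{\chi})}\bigr)$. Jensen's formula reduces this count to an average of $\log\max|L|$ on a slightly larger concentric circle, which I would then bound via an explicit Phragm\'en--Lindel\'of-type convexity estimate whose controlling parameter is exactly $\ell=\log\frac{q(T+2)}{2\pi}$. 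The hard part is purely quantitative: one must choose the Jensen radii and the pointwise bounds on $L(s,\chi)$ that drive the coefficient of $\ell$ down to $0.22737$, keep the $\log(1+\ell)$ coefficient equal to $2$, and absorb all remaining contributions (Stirling's remainder, the two vertical pieces, and the bounds on $L$ on $\sigma=3/2$) into the additive constant $-0.5$. None of the sub-steps is conceptually new, but carrying out this bookkeeping uniformly for all $T\geq 5/7$ and all $\chi$ of conductor $q>1$ with $\ell>1.567$ is precisely the work performed in \cite{BMOR}.
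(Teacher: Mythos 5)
The paper gives no proof of this lemma: it is quoted verbatim from Bennett, Martin, O'Bryant and Rechnitzer \cite{BMOR}, and your sketch (argument principle on a rectangle, folding via the functional equation, Stirling for the main term, and Backlund's trick with Jensen's formula and a convexity bound for the horizontal-segment error) is exactly the route taken in that cited source. Your outline is correct in structure and correctly identifies that the only substantive content is the explicit bookkeeping performed in \cite{BMOR}, so nothing further is needed here.
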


Considering the symmetry of zeros of $L(s,\chi_d)$, one can calculate the partial sum of the defining series \eqref{sum over gamma_d <2} for $B_{\chi_d}$ with the first $N$ positive ordinates $\gamma_1,\ldots,\gamma_N$ (we may assume that $L(1/2,\chi_d)\neq 0$, otherwise \eqref{sum over gamma_d <2} is plainly false):
\begin{align*}
    B_{\chi_d}=&\sum_{i=1}^N \frac{2}{1/4+\gamma_i^2}+\int_{{\gamma_{N+1}}^{-}}^\infty \frac{\mr{d}N(t,\chi_d)}{1/4+t^2}\\
    =&\sum_{i=1}^N \frac{2}{1/4+\gamma_i^2}-\frac{2N}{1/4+\gamma_{N+1}^2}+\int_{\gamma_{N+1}}^\infty \frac{2tN(t,\chi_d)}{(1/4+t^2)^2}\md t.
\end{align*}
This works if $\gamma_N\neq \gamma_{N+1}$, otherwise a slight modification suffices. The sum can be evaluated as long as the first $N$ zeros have been computed to sufficient accuracy, and the tail integral can be bounded by Lemma~\ref{N(T,chi)}. Hence, if $N$ is sufficiently large we would have enough precision to determine whether $d\in \mc{D}$ or not (again barring the coincidence $B_{\chi_d}=2$).

We now narrow down the range of $d$ to be checked. With the same notations and conditions as in Lemma~\ref{N(T,chi)}, take $\widetilde{N}(T,\chi_d)$ to be the least non-negative even integer greater than or equal to
\[
\frac{T}{\pi}\log \frac{|d|T}{2\pi e}-0.25-0.22737\ell-2\log(1+\ell)+0.5.
\]
Then $N(T,\chi_d)\geq \widetilde{N}(T,\chi_d)$ for $T\geq 5/7$, and hence
\[
B_{\chi_d}\geq \int_{T}^\infty \frac{2t\widetilde{N}(t,\chi_d)}{(1/4+t^2)^2}\md t.
\]
With the choice $T=0.928$, a numerical calculation shows that $d\not \in \mc{D}$ if $|d|\geq 460,000$. We used LCALC \cite{Rub} on SageMath \cite{Sage} to check all $d$ up to this magnitude, which took under three days on a 3.5GHz PC. A total of 178 fundamental discriminants were found to belong to $\mc{D}$, of which 125 are positive (with the largest being $1201$) and 53 are negative (the smallest being $-551$) \footnote{Note the significant disparity between the number of $d$'s we have to check and the actual cardinality of $\mc{D}$. A more efficient explicit estimate on $N(T,\chi)$ for small $T$ would probably have spared us from checking such a long range.}. They are given in Tables~\ref{table d>0} and ~\ref{table d<0} along with the approximate values of their corresponding $B_{\chi_d}$. There are quite fewer negative ones as it can be seen from \eqref{B in terms of L'/L} that $B_{\chi_d}$ tends to be larger when $\chi_d$ is odd. Every $d$ from $-40$ to $149$ is in $\mc{D}$, and every $d$ with $|d|\leq 100$ is in $\mc{D}$ except for $d\in\{-91, -88, -67, -43\}$. 

The direction $(b)\implies (a)$ in Theorem~\ref{thm real prim char} is obtained by modifying the argument in \S\ref{section: proof of thm E_1 E_2 necessity}. Supposing that GRH fails for some real $\chi$, we want to show that
\[
\int_X^\infty \frac{\psi(x,\chi)}{x^2}\md x=\Omega_{\pm}(X^{\Theta_\chi-1-\epsilon}),
\]
so that $\int_2^X E_1(x;d)\md x=\Omega_{\pm}(X^{\Theta_\chi-\epsilon})$. It is worth pointing out that for the proof to work as before, we need the right-hand side of
\[
\int_1^\infty \frac{\psi(x,\chi)}{x^{s+1}}\md x=-\frac{L'(s,\chi)}{sL(s,\chi)}
\]
to be analytic at all real $s>\Theta_{\chi}-\epsilon$. This is true as long as $L(\Theta_{\chi},\chi)\neq 0$, since then there must exist some $\epsilon>0$ such that $L(s,\chi)$ does not vanish on $(\Theta_\chi-\epsilon,\Theta_\chi)$. In particular it is true \footnote{In fact, Platt \cite[Theorems 7.1 and 7.2]{Platt} has verified that $L(s,\chi)$ has no real zero in $(0,1)$ for every primitive character $\chi$ of modulus $q\leq 400,000$.} for all $d\in \mc{D}$.

\section{Arithmetic progressions: Proof of Theorem~\ref{thm arith prog}} \label{section: arith prog}

We have
\begin{align*}
    E_1(x;q,a)=&\frac{\theta(x;q,a)-x/\phi(q)}{x}-\int_x^\infty \frac{\theta(y;q,a)-y/\phi(q)}{y^2}\md y,\\
    E_2(x;q,a)=&\frac{\pi(x;q,a)-\li(x)/\phi(q)}{x}-\int_x^\infty \frac{\pi(y;q,a)-\li(y)/\phi(q)}{y^2}\md y,
\end{align*}
and
\begin{align*}
    \int_2^X E_1(x;q,a)\md x=&-X\int_X^\infty \frac{\theta(x;q,a)-x/\phi(q)}{x^2}\md x+\widetilde{\mc{E}}_{1}(q,a),\\
    \int_2^X E_2(x;q,a)\md x=&-X\int_X^\infty \frac{\pi(x;q,a)-\li(x)/\phi(q)}{x^2}\md x+\widetilde{\mc{E}}_{2}(q,a)
\end{align*}
where $\widetilde{\mc{E}}_{i}(q,a)$ are appropriate constants that do not affect our discussion. Again we only address $E_1$ here. On combining \eqref{explicit formula for psi(x,chi)}, \eqref{orthogonality} and \eqref{error psi-theta arith pro} we see that
\[
    \int_2^X E_1(x;q,a)\md x=-\frac{1}{\phi(q)}\sum_{\chi \ (\mr{mod}\ q)}\ov{\chi}(a)\sum_{\rho_\chi}\frac{X^{\rho_\chi}}{\rho_\chi(\rho_\chi-1)}+\frac{2C(q,a)\sqrt{X}}{\phi(q)}+o(\sqrt{X}).
\]
Suppose that GRH holds for all $L(s,\chi)$ modulo $q$. If
\begin{equation}\label{def B_q}
    B_q:=\sum_{\chi \ (\mr{mod}\ q)}\sum_{\rho_\chi}\frac{1}{1/4+\gamma_\chi^2}<2C(q,a),
\end{equation}
then $\int_2^X E_1(x;q,a)>0$ for all sufficiently large $X$, and the set of moduli $q$ satisfying this bound is again finite. 

For $a=1$ we verify that this set is exactly $\mc{Q}$ as defined in \eqref{set Q}. Let $\omega(q)$ denote the number of distinct prime factors of $q$ and $\nu_2(q)$ the exponent of the highest power of $2$ dividing $q$. Then
\[
C(q,1)=
\begin{cases}
    2^{\omega(q)} & \nu_2(q) = 0 \:\:\text{or} \:\:2,\\
    2^{\omega(q)-1} & \nu_2(q) = 1,\\
    2^{\omega(q)+1} & \nu_2(q) \geq 3.
\end{cases}
\]
Moreover, the number of primitive characters modulo $q$ is given by 
\[
n(q)=q\prod_{p\mathrel\Vert q}(1-2/p)\prod_{p^2\mid q}(1-1/p)^2.
\]
Take $T=4$, so that $\ell>1.567$ for any $q\geq 6$ where $\ell$ was defined in Lemma~\ref{N(T,chi)}. As a result, if $q\not \equiv 2\ (\mr{mod} \ 4)$ (otherwise $n(q)=0$ and $B_q=B_{q/2}$), then for some primitive character $\chi \ (\mr{mod}\ q)$,
\begin{align}
    B_q> &\:
    n(q)\sum_{\gamma_\chi} \frac{1}{1/4+\gamma_\chi^2}\notag\\
    =&\: n(q)\int_0^\infty \frac{2tN(t,\chi)}{(1/4+t^2)^2}\md t \notag\\
    \geq &\: n(q)\int_4^\infty \frac{2t(\frac{t}{\pi}\log \frac{qt}{2\pi e}-0.25-0.22737\ell-2\log(1+\ell)+0.5)}{(1/4+t^2)^2}\md t \label{lower bound B_q}.
\end{align}
A short calculation shows that for all non-trivial $q>2040$, $n(q)/C(q,1)>3$ and the integral on the right-hand side of \eqref{lower bound B_q} is greater than $2/3$, and thus $B_q>2C(q,1)$. We checked all $q$ up to 2040 to obtain the candidate set $\mc{Q}'$ consisting of all elements $q'$ for which the right-hand side of \eqref{lower bound B_q} is bounded above by $2C(q',1)$. Then, for each $q\in \mc{Q}'$ we used LCALC to compute a partial sum of $B_q$ with the first $700$ zeros of each $\chi \ (\mr{mod}\ q)$ and bounded the tail using Lemma~\ref{N(T,chi)}, which ruled out all $q\in \mc{Q}'\setminus \mc{Q}$. 

Conversely, suppose that GRH fails for some character modulo $q$ so that $\Theta_q>1/2$. We need
\[
\int_1^\infty \frac{\psi(x;q,a)-x/\phi(q)}{x^{s+1}}\md x=-\frac{1}{\phi(q)(s-1)}-\frac{1}{\phi(q)}\sum_{\chi \ (\mr{mod}\ q)}\ov{\chi}(a)\frac{L'(s,\chi)}{sL(s,\chi)}
\]
to be analytic at all real $s>\Theta_q-\epsilon$ but not on the half plane $\sigma> \Theta_q-\epsilon$. This is true if $\prod_{\chi \ (\mr{mod}\ q)} L(\Theta_q,\chi)\neq 0$ (which holds for all $q\in \mc{Q})$ and $a=1$. Indeed, in this case even if multiple characters modulo $q$ share the same zero $\rho$, their contributions to the pole at $s=\rho$ do not cancel out since $\Re(1/\rho)>0$, while if $a\neq 1$ the rotations caused by $\ov{\chi}(a)$ could possibly remove the pole.

\section{Concluding remarks and further questions}
\begin{enumerate}
    \item 
    Unsatisfactorily, the proof of Theorem~\ref{thm E_3} relies on Assumption~\ref{assumption} in the case where $\Theta=1$. Is it possible to further simplify, or even completely remove this assumption?
    
    \item 
    Theorems~\ref{thm real prim char} and \ref{thm arith prog} are closely related to the following (perhaps more natural) questions. Assuming GRH, for which $d$ is
    \[
    \int_2^X \pi(x,\chi_d)\md x<0, \hspace{0.5cm} X>X_0
    \]
    true? For which pairs of $(q,a)$ is
    \begin{equation}\label{(q,a)}
        \int_2^X \left(\pi(x;q,a)-\frac{\li(x)}{\phi(q)}\right)\md x<0, \hspace{0.5cm} X>X_0
    \end{equation}
    true\footnote{This is Problem \#23 in \cite{HKMB}, posed by Johnston.}? Considering the integral of the explicit formula, we see that the answer to the first question is some set $\mc{D}'$ containing $\mc{D}$, essentially because $|\rho(\rho+1)|>|\rho(\rho-1)|$. We slightly modified the code and found 381 elements in $\mc{D}'$, the smallest being $-1151$ and largest 2161. We also addressed the second question as we did in \S\ref{section: arith prog} and listed the results in Table \ref{table (q,a)}. 

    \begin{table} 
    \centering 
    \caption{Pairs $(q,a)$ for which \eqref{(q,a)} holds.}
    \label{table (q,a)}
    \begin{tabular}{ c | c c c c c}
         $q$ & & & $a$\\
         \hline
         2 & 1\\
         \hline
         3 & 1 \\
         \hline
         4 & 1 \\
         \hline
         5 & 1 & 4 \\ 
         \hline
         7 & 1 & 2 & 4\\
         \hline
         8 & 1\\
         \hline 
         9 & 1 & 4 & 7 \\  
         \hline 
         10 & 1 & 9\\
         \hline
         11 & 1 & 3 & 4 & 5 & 9 \\
         \hline 
         12 & 1\\
         \hline
         14 & 1 & 9 & 11\\
         \hline 
         15 & 1 & 4\\
         \hline 
         16 & 1 & 9 \\
         \hline
         18 & 1 & 7 & 13 \\
         \hline
         20 & 1 & 9\\
         \hline
         21 & 1 & 4 & 16\\
         \hline 
         22 & 1 & 3 & 5 & 9 & 15\\
         \hline 
         24 & 1\\
         \hline
         28 & 1 & 9 & 25\\
         \hline 
         30 & 1 & 19\\
         \hline
         36 & 1 & 13 & 25\\
         \hline 
         40 & 1 & 9\\
         \hline 
         42 & 1 & 25 & 37\\
         \hline 
         48 & 1 & 25\\
         \hline 
         60 & 1 & 49\\
         \hline 
         84 & 1 & 25& 37\\
         \hline 
         120 & 1 & 49 \\
         \hline
    \end{tabular}
    \end{table}

    \item One may also look at the analogue of Mertens' theorems for number fields $\bb{K}/\bb{Q}$ (see, e.g., \cite{Ros}). 
    For example, for a quadratic field $\bb{K}=\bb{Q}(\sqrt{d})$ we know that $\zeta_\bb{K}(s)=\zeta(s)L(s,\chi_{\Delta_\bb{K}})$ where $\Delta_{\bb{K}}$ is the discriminant of $\bb{K}$. Thus, if $B_1+B_{\chi_{\Delta_\bb{K}}}<2$, then RH for $\zeta_\bb{K}(s)$ is equivalent to the condition
    \[
    \int_2^X \bigg(\sum_{N(\mathfrak{p})\leq x}\frac{1}{N(\mathfrak{p})}-\log\log x-\mc{E}_{2,\bb{K}}\bigg)\md x >0, \hspace{0.5cm} X> X_0.
    \]
    As the degree or the discriminant of $\bb{K}$ go up, so does the number of non-trivial zeros of $\zeta_{\bb{K}}(s)$ up to height $T$. According to the Hermite\textendash Minkowski theorem we can only assert this equivalence for finitely many $\bb{K}$ (up to isomorphism). Can one classify all such $\bb{K}$?
\end{enumerate}

\begin{table}[ht!]
\centering
  \caption{Positive fundamental discriminants in $\mc{D}$ and their associated $B_{\chi_d}$.}
  \label{table d>0}
 \begin{tabular}{|c c | c c| c c | c c |} 
 \hline 
 $d$ & $B_{\chi_d}$ & $d$ & $B_{\chi_d}$  & $d$ & $B_{\chi_d}$ & $d$ & $B_{\chi_d}$ \\
 \hline
 5 & $0.156\ldots$ & 8 & $0.235\ldots$ & 12 & $0.330\ldots$ & 13 & $0.396\ldots$  \\
 \hline 
 17 & $0.387\ldots$ & 21 & $0.614\ldots$ & 24 & $0.552\ldots$ & 28 & $0.572\ldots$  \\
 \hline 
 29 & $0.879\ldots$ & 33 & $0.566\ldots$ & 37& $0.814\ldots$ & 40 & $0.697\ldots$  \\
 \hline 
 41 & $0.682\ldots$ & 44 & $0.908\ldots$ & 53 & $1.463\ldots$ & 56 & $1.056\ldots$ \\
 \hline
 57 & $0.768\ldots$ & 60 & $0.973\ldots$ & 61 & $0.993\ldots$ & 65 & $0.897\ldots$ \\
 \hline
 69 & $1.238\ldots$ & 73 & $0.797\ldots$ & 76 & $0.939\ldots$ & 77 & $1.922\ldots$\\
 \hline
 85 & $1.176\ldots$ & 88 & $1.070\ldots$ & 89 & $1.033\ldots$ & 92 & $1.565\ldots$ \\
 \hline
 93 & $1.590\ldots$ & 97 & $0.913\ldots$ & 101 & $1.838\ldots$ & 104 & $1.498\ldots$ \\
 \hline 
 105 & $1.014\ldots$ & 109 & $1.218\ldots$ & 113 & $1.245\ldots$ & 120 & $1.368\ldots$\\
 \hline
 124 & $1.164\ldots$ & 129 & $1.079\ldots$ & 133 & $1.573\ldots$ & 136 & $1.210\ldots$ \\
 \hline
 137 & $1.368\ldots$ & 140 & $1.853\ldots$ & 141 & $1.555\ldots$ & 145 & $1.052\ldots$ \\
 \hline
 149 & $1.926\ldots$ & 156 & $1.418\ldots$ & 157 & $1.769\ldots$ & 161 & $1.328\ldots$ \\
 \hline
 165 & $1.824\ldots$ & 168 & $1.783\ldots$ & 172 & $1.449\ldots$ & 177 & $1.315\ldots$\\
 \hline
 181 & $1.557\ldots$ & 184 & $1.302\ldots$ & 185 & $1.543\ldots$ & 193 & $1.195\ldots$ \\
 \hline
 201 & $1.291\ldots$ & 204 & $1.563\ldots$ & 205 & $1.656\ldots$ & 209 & $1.511\ldots$ \\
 \hline
 217 & $1.278\ldots$ & 220 & $1.585\ldots$ & 229 & $1.714\ldots$ & 232 & $1.620\ldots$ \\
 \hline
 233 & $1.746\ldots$ & 241 & $1.228\ldots$ & 249 & $1.338\ldots$ & 253 & $1.969\ldots$ \\
 \hline
 257 & $1.962\ldots$ & 264 & $1.868\ldots$ & 265 & $1.331\ldots$ & 268 & $1.709\ldots$ \\
 \hline
 273 & $1.617\ldots$ & 280 & $1.667\ldots$ & 281 & $1.587\ldots$ & 301 & $1.798\ldots$ \\
 \hline
 305 & $1.781\ldots$ & 309 & $1.964\ldots$ & 313 & $1.501\ldots$ & 316 & $1.563\ldots$ \\
 \hline
 321 & $1.556\ldots$ & 329 & $1.735\ldots$ &
 337 & $1.468\ldots$ & 345 & $1.581\ldots$\\
 \hline
 364 & $1.677\ldots$ & 376 & $1.809\ldots$ & 385 & $1.490\ldots$ & 393 & $1.812\ldots$ \\
 \hline
 401 & $1.747\ldots$ & 409 & $1.489\ldots$ & 417 & $1.852\ldots$ & 421 & $1.890\ldots$ \\
 \hline
 424 & $1.739\ldots$ & 433 & $1.689\ldots$ &
 449 & $1.808\ldots$ & 456 & $1.974\ldots$ \\
 \hline
 457 & $1.648\ldots$ & 465 & $1.889\ldots$ & 481 & $1.570\ldots$ & 489 & $1.790\ldots$ \\
 \hline
 505 & $1.596\ldots$ & 520 & $1.937\ldots$ &
 721 & $1.747\ldots$ & 745 & $1.940\ldots$ \\
 \hline
 769 & $1.840\ldots$ & 793 & $1.916\ldots$ &
 849 & $1.963\ldots$ & 865 & $1.894\ldots$ \\
 \hline
 889 & $1.808\ldots$ & 1009 & $1.865\ldots$ & 1081 & $1.988\ldots$ & 1129 & $1.924\ldots$ \\ 
 \hline
 1201 & $1.950\ldots$ & & & & & &\\
 \hline
 \end{tabular}
\end{table}

\clearpage

\begin{table}[ht!]
\centering
\caption{Negative fundamental discriminants in $\mc{D}$ and their associated $B_{\chi_d}$.}
\label{table d<0}
 \begin{tabular}{|c c | c c| c c | c c |} 
 \hline 
 $d$ & $B_{\chi_d}$ & $d$ & $B_{\chi_d}$  & $d$ & $B_{\chi_d}$ & $d$ & $B_{\chi_d}$ \\
 \hline
 $-3$ & $0.113\ldots$ & $-4$ & $0.155\ldots$ & $-7$ & $0.255\ldots$ & $-8$ & $0.316\ldots$ \\
 \hline
 $-11$ & $0.507\ldots$ & $-15$ & $0.459\ldots$ &
 $-19$ & $1.052\ldots$ & $-20$ & $0.638\ldots$ \\
 \hline
 $-23$ & $0.569\ldots$ & $-24$ & $0.797\ldots$ & $-31$ & $0.809\ldots$ & $-35$ & $1.110\ldots$ \\
 \hline
 $-39$ & $0.823\ldots$ & $-40$ & 
 $1.404\ldots$ & $-47$ & $0.845\ldots$ &
 $-51$ & $1.582\ldots$ \\
 \hline
 $-52$ & $1.837\ldots$ & $-55$ & $1.175\ldots$ & $-56$ & $1.096\ldots$ & $-59$ & $1.289\ldots$ \\
 \hline
 $-68$ & $1.335\ldots$ & $-71$ & $0.975\ldots$ & $-79$ & $1.382\ldots$ & $-83$ & $1.778\ldots$ \\
 \hline
 $-84$ & $1.576\ldots$ & $-87$ & $1.330\ldots$ & $-95$ & $1.163\ldots$ & $-103$ & $1.774\ldots$ \\
 \hline
 $-104$ & $1.389\ldots$ & $-111$ & $1.300\ldots$ & $-116$ & $1.540\ldots$ &
 $-119$ & $1.190\ldots$ \\
 \hline
 $-131$ & $1.677\ldots$ & $-143$ & $1.406\ldots$ & $-151$ & $1.853\ldots$ & $-152$ & $1.943\ldots$ \\
 \hline
 $-159$ & $1.485\ldots$ & $-164$ & $1.617\ldots$ & $-167$ & $1.489\ldots$  &
 $-191$ & $1.468\ldots$ \\
 \hline
 $-199$ & $1.870\ldots$ & $-215$ & $1.516\ldots$ & $-231$ & $1.755\ldots$ &
 $-239$ & $1.579\ldots$ \\
 \hline
 $-255$ & $1.892\ldots$ & $-263$ & $1.914\ldots$ & $-287$ & $1.933\ldots$ & $-311$ & $1.615\ldots$ \\
 \hline
 $-335$ & $1.783\ldots$ & $-359$ & $1.818\ldots$ & $-431$ & $1.938\ldots$ & $-479$ & $1.824\ldots$ \\
 \hline
 $-551$ & $1.980\ldots$ & & & & & &\\
 \hline
 \end{tabular}
\end{table}

\section{Data Availability Statement}
The SageMath code used to generate the relevant results is available at \cite{Git}.

\section{Acknowledgments}
I am grateful to Professor Ghaith Hiary for many fruitful discussions on this topic, and also to the anonymous referee for their invaluable comments and suggestions.

\sloppy
\printbibliography

\end{document}